\newcommand{%
	\begin{center}
		
		\import{./figs/}{.pdf_tex}
	\end{center}
}[1]{%
	\begin{center}
		
		\import{./figs/}{#1.pdf_tex}
	\end{center}
}
\def\namedlabel#1#2{\begingroup
    #2%
    \def\@currentlabel{#2}%
    \label{#1}\endgroup
}
\theoremstyle{plain}
\newtheorem{theorem}{Theorem}[section]
\newtheorem{corollary}[theorem]{Corollary}
\newtheorem{lemma}[theorem]{Lemma}
\theoremstyle{definition}
\newtheorem{remark}[theorem]{Remark}
\newtheorem{example}[theorem]{Example}
\numberwithin{equation}{section}
\renewcommand\labelenumi{\textup{\alph{enumi})}}
\renewcommand\theenumi\labelenumi
\makeatletter\renewcommand{\p@enumii}{}\makeatother 
\renewcommand{\leq}{\leqslant}
\renewcommand{\geq}{\geqslant}
\newcommand{\loc}{\mathrm{loc}}
\DeclareMathOperator{\spec}{spec}
\DeclarePairedDelimiter\norm{\lVert}{\rVert}%
\newcommand{\R}{\mathds{R}}
\newcommand{\Ee}{\mathds{E}}
\newcommand{\nI}{\textup{I}}
\newcommand{\pr}{\mathds{P}}
\newcommand{\ex}{\mathds{E}}
\begin{document}
	
	\title[Integral kernels of Schrödinger semigroups]
	{Integral kernels of Schrödinger semigroups with nonnegative locally bounded potentials}
	
	
	\author[M.~Baraniewicz]{Miłosz Baraniewicz}
	\address[M.~Baraniewicz]{Faculty of Pure and Applied Mathematics\\ Wroc{\l}aw University of Science and Technology\\ ul. Wybrze{\.z}e Wyspia{\'n}skiego 27, 50-370 Wroc{\l}aw, Poland}
	\email{milosz.baraniewicz@pwr.edu.pl}
	
	\author[K.~Kaleta]{Kamil Kaleta}
	\address[K.~Kaleta]{Faculty of Pure and Applied Mathematics\\ Wroc{\l}aw University of Science and Technology\\ ul. Wybrze{\.z}e Wyspia{\'n}skiego 27, 50-370 Wroc{\l}aw, Poland}
	\email{kamil.kaleta@pwr.edu.pl}
	
	\thanks{Research supported by National Science Centre, Poland, grant no.\ 2019/35/B/ST1/02421}
	
	\begin{abstract}
		We give the upper and the lower estimates of heat kernels for Schr\"odinger operators $H=-\Delta+V$, with nonnegative and locally bounded potentials $V$ in $\R^d$, $d \geq 1$. We observe a factorization:\ the contribution of the potential is described separately for each spatial variable, and the interplay between the spatial variables is seen only through the Gaussian kernel -- optimal in the lower bound and  nearly optimal  in the upper bound. In some regimes we observe the exponential decay in time with the rate corresponding to the bottom of the spectrum of $H$. Our estimates identify in a fairly informative and uniform way the dependence of the potential $V$ and the dimension $d$. The upper estimate is more local; it applies to general potentials, including confining and decaying ones, even if they are non-radial, and mixtures. The lower bound is specialized to confining case, and the contribution of the potential is described in terms of its radial upper profile. Our results take the sharpest form for confining potentials that are comparable to radial monotone profiles with sufficiently regular growth  -- in this case they lead to two-sided qualitatively sharp estimates. In particular, we describe the large-time behaviour of \emph{nonintrinsically ultracontractive} Schr\"odinger semigroups -- this problem was open for a long time. The methods we use combine probabilistic techniques with analytic ideas. We propose a new effective approach which leads us to short and direct proofs.
		
\medskip
	
\noindent
\emph{Key-words and phrases}: heat kernel, integral kernel, confining potential, unbounded potential, decaying potential, Schr\"odinger operator, ground state, nonintrinsically ultracontractive semigroup, Feynman--Kac formula, killed Brownian motion.

\medskip

\noindent
2010 {\it MS Classification}: 47D08, 60J65, 35K08.  \\
	\end{abstract}

	\maketitle
	
		\section{Introduction}

	We consider the Schr\"odinger operator $H=-\Delta+V$ acting in $L^2(\R^d,dx)$, $d \geq 1$, where the potential $V:\R^d \to [0,\infty)$ is a locally bounded function. The corresponding Schr\"odinger semigroup $\big\{e^{-tH}: t \geq 0\big\}$ consists of integral operators, i.e.\
	\[
	e^{-tH} f(x) = \int_{\R^d} u_t(x,y) f(y) dy, \quad f \in L^2(\R^d,dx), \ t>0.
	\]
It is known that $(0,\infty) \times \R^d \times \R^d \ni (t,x,y) \mapsto u_t(x,y)$ is a continuous function, symmetric in $(x,y)$. It satisfies the inequality $u_t(x,y) \leq g_t(x,y)$, where 
\[
g_t(x,y) = \frac{1}{(4 \pi t)^{d/2}} \exp\left(- \frac{|y-x|^2}{4t}\right)
\]
is the Gauss--Weierstrass kernel, the heat kernel of the free kinetic term $-\Delta$. Our standard references for Schr\"odinger operators and semigroups are the paper by Simon \cite{Simon} and the monographs by Reed and Simon \cite{Reed-Simon}, Demuth and van Casteren \cite{Demuth-Casteren}, and Chung and Zhao \cite{Chung-Zhao}.

The main goal of the paper is to find the upper and the lower estimates of the integral kernels $u_t(x,y)$, as uniform and informative as possible.  We want to understand the contribution of the potential and the relation to the heat kernel $g_t(x,y)$ of the kinetic term $-\Delta$. 

Estimates of Schr\"odinger heat kernels have been widely studied in the literature. For a class of nonnegative and locally bounded potentials, research concentrated mainly on two extremal situations:\ when the potential is \emph{confining} (i.e.\ $V(x) \to \infty$ as $|x| \to \infty$) and \emph{decaying} (i.e.\ $V(x) \to 0$ as $|x| \to \infty$). Note that confining potentials are locally bounded and \emph{unbounded at infinity}, while decaying ones are always bounded. 

For confining potentials known results include classical \emph{intrinsic ultracontractivity} estimates by Davies and 
Simon \cite{ Davies-book,Davies-Simon}, diagonal-on estimates for polynomial potentials obtained by Sikora \cite{Sikora}, and short-time bounds by Metafune, Pallara and Rhandi \cite{Metafune-Pallara-Rhandi}, Metafune and Spina \cite{Metafune-Spina}, and Spina \cite{Spina}. For decaying case we refer to the paper by Zhang \cite{Zhang-1}. See also further references in these papers.  It is known to be a  challenging problem to identify the actual contribution of the potential in a sharp and clear way. For both cases, the sharpest results were obtained for rather restricted classes of potentials, mainly of polynomial type. Moreover, when the potential is bounded or it grows at infinity not too fast, then  the connection with the original Gaussian kernel is expected as well. In that case we see the term $g_{ct}(x,y)$ in the estimates. The constant $c$, typically different in the lower and the upper bound, is often far from being optimal.  In connection to that, we want to mention here the recent papers by Bogdan, Dziubański and Szczypkowski \cite{Bogdan-Dziubanski-Szczypkowski}, and Jakubowski and Szczypkowski \cite{Jakubowski-Szczypkowski} which thoroughly analyze the problem of uniform comparability of the Schr\"odinger heat kernel with the original heat kernel $g_t(x,y)$ of the kinetic term $-\Delta$, and give sharp conditions for this property. We also refer to these papers for an extensive discussion of the related literature.

Theorems \ref{th:main-1} and \ref{th:lower_main}, the main results of this paper, give the estimates of the kernel $u_t(x,y)$ which are global in time and space.

 The upper estimate in Theorem \ref{th:main-1} applies to every nonnegative locally bounded potential, i.e.\ it allows one to analyze in a joint framework the functions $V$ of various types, including confining and decaying potentials, functions being bounded away from zero or even mixtures of all of these types.  For instance,  in one-dimensional case it covers highly nonsymmetric potentials which are confining on one halfline, and decaying or bounded on the other.  This seems to be a novelty.  Moreover, this bound matches, at least structurally and qualitatively, the lower estimate in Theorem \ref{th:lower_main} which is specialized to confining potentials or potentials with confining components in unbounded subsets of the space. 
It leads to qualitatively sharp two-sided estimates for confining potentials that are comparable to radial  monotone  profiles with sufficiently regular growth, see Examples \ref{ex:conf} and \ref{ex:conf_gen} for illustration. In particular, we give the large-time estimates for \emph{nonintrinsically ultracontractive} Schr\"odinger semigroups. This problem was solved for a large class of nonlocal operators \cite{Kaleta-Schilling}, but it was left open for classical Schr\"odinger semigroups.   

We remark that our estimates identify in a fairly informative and uniform way the dependence on the potential and the dimension of the space. Moreover, the Gaussian term is optimal (the lower bound) or it can be made nearly optimal (the upper bound). We propose a novel approach which leads us to rather short and direct proofs.     
To the best of our knowledge, these contributions are new. 

The structure of the paper is as follows. In Section \ref{sec:results} we present Theorems \ref{th:main-1} and \ref{th:lower_main}, discuss the structure of estimates, uniform rates and constants, and explain the ideas of proofs. Section \ref{sec:prelim} contains preliminaries. In Sections \ref{sec:upper} and \ref{sec:lower} we give the proofs of the upper bound and the lower bound, respectively. Finally, in Section \ref{sec:applications}, we discuss applications of our estimates and connections to the literature. In particular, 
Subsection \ref{subsec:conf} is devoted to analysis of confining potentials, and the two-sided estimates we get for this class. In Subsection \ref{subsec:dec} we illustrate the upper bound for  decaying potentials.  In the last Subsections \ref{subsec:bdd_away} and \ref{subsec:general} we analyze the examples of potentials that are bounded away from zero, and some more general cases, including mixtures and those  nonradial.  

\subsection*{Acknowledgement} We are grateful to Krzysztof Bogdan, Tomasz Jakubowski, Tadeusz Kulczycki and Ren\'e Schilling for discussions and useful comments which helped us to improve the presentation of our manuscript. We also thank Grzegorz Serafin for the discussion on the estimates of transition densities of the killed Brownian motion and related references. 

\subsection*{Notation}
The Euclidean ball centered at $x \in \R^d$, with radius $r>0$, will be denoted by $B_r(x)$. When $x=0$, we simply write $B_r$. We use the notation $a \wedge b := \min\left\{a,b\right\}$ and $a \vee b := \max\left\{a,b\right\}$.

\section{Presentation of results} \label{sec:results}

Our first main result gives the upper estimate in terms of the \emph{lower profile} of the potential:
\[ 
V_*(x):= \inf_{z \in \overline{B}_{|x|/2}(x)} V(z), \qquad x \in \R^d.
\]
We use the following notation:
		\[
		H(t,x):= \exp\left(-\frac{\sqrt{2}}{32} \left(\left(V_*(x)+\frac{\mu_0}{4|x|^2}\right)t \, \wedge \, 2|x| \sqrt{V_*(x)+\frac{\mu_0}{4|x|^2}} \, \right)\right), \quad x \neq 0,
\]
$H(t,0) := \exp\big(-\frac{\sqrt{2 \mu_0}}{32}\big)$, and
\[
\gamma_1:=\frac{\lambda_0}{2}.
\]
The number $0 \leq \lambda_0:= \inf \sigma(H)$ is the bottom of the spectrum of the Schr\"odinger operator $H$, and $\mu_0>0$ is the principal eigenvalue of the operator $-\Delta_{B_1}$, the (positive) Dirichlet Laplacian on the ball $B_1$. 

\begin{theorem} \label{th:main-1}	
There exists a constant $c_1=c_1(d)$ such that for every $0 \leq V \in L^{\infty}_{\loc}(\R^d)$, $x,y \in \R^d$ and $t >0$ we have
		\begin{equation*}
			u_t(x,y) \leq c_1 \, H(t,x) \, H(t,y) \, \left(g_{2t}(x,y) \wedge e^{-\gamma_1 t} g_{t}(0,0) \right).
		\end{equation*} 
\end{theorem}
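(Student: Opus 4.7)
The plan is to first establish a \emph{one-sided} pointwise estimate of the form
\[
u_t(x,z) \leq C_0 \, H(c_0 t, x)\, g_{2t}(x,z), \qquad x,z \in \R^d,\ t>0,
\]
and then symmetrize it via the Chapman--Kolmogorov identity $u_{2t}(x,y) = \int u_t(x,z)\,u_t(z,y)\,dz$. Using $u_t(z,y) = u_t(y,z)$ and the Gaussian convolution $g_{2t}*g_{2t} = g_{4t}$, the two one-sided bounds combine into
\[
u_{2t}(x,y) \leq C_0^2\, H(c_0 t, x)\, H(c_0 t, y)\, g_{4t}(x,y),
\]
and the rescaling $t \mapsto t/2$ recovers the factor $g_{2t}(x,y)$ of the claim; any numerical loss in the exponent from the rescaling is absorbed into the small coefficient $\sqrt{2}/32$ in the exponent of $H$.

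The one-sided estimate is the heart of the argument and I would derive it by combining the Feynman--Kac representation with an exit-time analysis. Let $\tau := \inf\{s>0 : B_s \notin \overline{B}_{|x|/2}(x)\}$ be the first exit time of Brownian motion starting at $x$ from the ball on which, by definition of $V_*$, one has $V \geq V_*(x)$. On the event $\{\tau > s\}$ the Feynman--Kac functional gains the factor $e^{-sV_*(x)}$, and the Dirichlet survival estimate $\mathbb{P}_x(\tau > s) \leq C e^{-c\mu_0 s/|x|^2}$ (coming from the principal eigenvalue of $-\Delta$ on a ball of radius $|x|/2$) combines with it into a rate of order $V_*(x) + \mu_0/|x|^2$. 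The two regimes of the minimum defining $H(t,x)$ then arise from optimally balancing the \emph{stay-in-ball} cost $V_*(x)\cdot s$ against the \emph{exit-ball} Gaussian cost $\sim |x|^2/s$ of travelling the radius $|x|/2$ in time $s$: for $t \lesssim |x|/\sqrt{V_*(x)}$ one takes $s=t$ and gets the linear-in-$t$ exponent $(V_*(x)+c\mu_0/|x|^2)\,t$, whereas for larger $t$ the optimal $s \asymp |x|/\sqrt{V_*(x)}$ yields the Agmon-type saturation $|x|\sqrt{V_*(x)+\mu_0/|x|^2}$. The complementary event $\{\tau \leq s\}$ is handled by the strong Markov property at $\tau$, bounding $u_{t-\tau}(B_\tau,z) \leq g_{t-\tau}(B_\tau,z)$ and absorbing the resulting convolution into $g_{2t}(x,z)$.

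For the additional spectral factor $e^{-\gamma_1 t} g_t(0,0)$ appearing in the minimum, I would use a standard Cauchy--Schwarz/ultracontractivity step: $u_t(x,y)^2 \leq u_t(x,x)\,u_t(y,y)$, combined with a diagonal bound $u_t(x,x) \leq (4\pi t_0)^{-d/2} e^{-\lambda_0(t-t_0)}$ that follows from $u_{t_0}(x,x) \leq g_{t_0}(0,0)$ together with the operator-norm decay $\|e^{-(t-t_0)H}\| = e^{-\lambda_0(t-t_0)}$. Taking $t_0 = t/2$ and multiplying by the $H(t,x)H(t,y)$ factor already produced by the main argument yields the second piece of the minimum (with $\gamma_1 = \lambda_0/2$ emerging naturally from this split). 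The main technical obstacle, in my view, is the careful tracking of constants: obtaining exactly the rate $\mu_0/(4|x|^2)$ inside $H(t,x)$, rather than the naive $4\mu_0/|x|^2$ coming from the ball $B_{|x|/2}(x)$, seems to require either accepting the small prefactor $\sqrt{2}/32$ --- as the paper does --- or exploiting the larger ball $B_{2|x|}$, whose Dirichlet eigenvalue equals precisely $\mu_0/(4|x|^2)$. Pushing these constants cleanly through the exit-probability/Gaussian balancing and the subsequent symmetrization is the real bookkeeping of the proof, while the underlying probabilistic and convolution steps are short and conceptual.
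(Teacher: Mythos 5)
Your overall strategy matches the paper's: establish a one-sided estimate by a Feynman--Kac/exit-time decomposition around the ball $B_{|x|/2}(x)$ (where $V\geq V_*(x)$), symmetrize it via the Chapman--Kolmogorov identity, and obtain the spectral factor from Cauchy--Schwarz together with the operator-norm decay $\norm{e^{-tH}}_{2,2}=e^{-\lambda_0 t}$. However, as written, your sketch has two concrete gaps.

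First, in both branches of the split you need a mechanism to \emph{simultaneously} retain the off-diagonal Gaussian $g_{at}(x,z)$ and the exit-time/survival decay, and your sketch does not supply one. On $\{\tau>s\}$ you quote $\pr_x(\tau>s)\lesssim e^{-c\mu_0 s/|x|^2}$, and on $\{\tau\leq s\}$ you propose to bound $u_{t-\tau}(X_\tau,z)\leq g_{t-\tau}(X_\tau,z)$ and "absorb the convolution into $g_{2t}(x,z)$" — but taking the expectation of the terminal kernel over all paths gives back $g_t(x,z)$ with no extra decay, while keeping only the survival probability or the Feynman--Kac weight loses the off-diagonal Gaussian. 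The paper resolves this with Hölder's inequality with conjugate exponents $a$ and $b=a/(a-1)$: e.g.\ $\ex_x\left[e^{-V_*(x)\tau}g_{t/b}(X_{t/a},z)\right]\leq \ex_x\left[e^{-bV_*(x)\tau}\right]^{1/b}\,\ex_x\left[g_{t/b}(X_{t/a},z)^a\right]^{1/a}$, where the second factor equals $a^{d/2}b^{(a-1)d/(2a)}g_{at}(x,z)$ and the first is controlled by the explicit Laplace transform of the exit time, $\ex_0\left[e^{-\lambda\tau_{B_r}}\right]\leq Ce^{-\sqrt{\lambda}\,r/2}$ (Wendel's formula, the paper's Lemma~\ref{lem:Wendel}). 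Your "balancing $s$" heuristic predicts the right Agmon exponent, but the Hölder split (and the Laplace transform in place of the optimisation over $s$) is the missing technical step that makes the argument rigorous.

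Second, for the spectral piece you propose to prove the bare bound $u_t(x,x)\leq (2\pi t)^{-d/2}e^{-\lambda_0 t/2}$ and then "multiply by the $H(t,x)H(t,y)$ factor already produced by the main argument." Multiplying two separate upper bounds for the same quantity does not yield a valid upper bound. The correct step, as in the paper, is to first write $u_t(x,y)\leq e^{-\lambda_0 t/2}\sqrt{u_{t/2}(x,x)}\sqrt{u_{t/2}(y,y)}$ (Cauchy--Schwarz plus $\norm{e^{-(t/4)H}}_{2,2}=e^{-\lambda_0 t/4}$ applied to $u_t(x,x)=\norm{e^{-(t/4)H}u_{t/4}(x,\cdot)}_2^2$), and then apply the one-sided estimate \emph{on the diagonal}, $u_{t/2}(x,x)\leq C\,h(t,x)\,g_{(a/2)t}(x,x)$, so that the $H$-type factor is produced inside the square roots. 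This is precisely why the theorem's $H(t,x)$ is $\sqrt{h(t,x)}$ (up to a constant adjustment made in the proof of Theorem~\ref{th:main-1}) rather than $h(t,x)$ itself.
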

	
\bigskip
	
\noindent
We note that $c_1$ can be chosen as
\[
c_1=2^{(3d+4)/2}  e^{\sqrt{\mu_0}/8} (C_0 \vee C),
\]
where the constants $C_0$ and $C$ (independent of $V$) come from \eqref{eq:C_0_def} and Lemma \ref{lem:Wendel} below.
	
The upper bound in Theorem \ref{th:main-1} is \emph{uniform} in $0 \leq V \in L^{\infty}_{\loc}(\R^d)$ in the sense that the rate $\sqrt{2}/32$ in $H(t,x)$ and the constant $c_1$ are independent of $V$.  This estimate  is a version of more general result, stated as Theorem \ref{th:upper_final}, which allows one to get the upper bound with the Gaussian term $g_{at}(x,y)$ with $a >1$ arbitrarily close to $1$, i.e.\ the Gaussian term can be made \emph{ nearly optimal }. Note, however, that this can be done at the cost of the absolute constant in the exponent of $H(t,x)$. For simplicity, here we just choose $a=2$. We remark that the rate $\sqrt{2}/32$ is not optimal, but we state it here explicitly in order to show the uniform structure of the estimate. 
For applications of Theorem \ref{th:main-1}, examples and further discussion we refer the reader to Section 
\ref{sec:applications}. We remark that it gives qualitatively sharp bounds for a large class of potentials.

Our second main result gives the lower estimate of the kernel $u_t(x,y)$ in terms of the following \emph{radial upper profile} of the potential
\[
V^*(r) := \sup\limits_{z \in \overline{B}_{2r}} V(z), \qquad r \geq 0.
\] 
In the statement and below we use the shorthand notation:
\[
t_{\rho} := \frac{\rho}{2\sqrt{V^{*}(\rho)+\frac{\mu_0}{4\rho^2}}}, \quad \rho \geq 1,
\]
\[
K(t,\rho):= \exp\left(-\frac{9}{4} \left( \left(V^{*}(\rho)+\frac{\mu_0}{4\rho^2}\right)t \, \wedge \,  2\rho \sqrt{V^{*}(\rho)+\frac{\mu_0}{4\rho^2}}\right)\right), 
\]
\smallskip
\[
\rho_x := |x| \vee 1 \qquad \text{and} \qquad \gamma_2:=d+V^{*}(1)+\frac{\mu_0}{4}.
\]
\begin{theorem} \label{th:lower_main} 
There exists $c_2 = c_2(d)$ such that for every $0 \leq V \in L^{\infty}_{\loc}(\R^d)$, $x, y \in \R^d$ and $t>0$ we have the following estimates.
\begin{itemize}
\item[(1)] If $4t_{\rho_x \vee \rho_y} \leq t$, then 
\begin{align*}
u_t(x,y) \geq c_2 e^{-\gamma_2 t} g_t(0,0) K(t, \rho_x) 
 K(t,\rho_y).
\end{align*}

\item[(2)] If $4t_{\rho_x \vee \rho_y} \geq t$, then
\begin{align*}
u_t(x,y) \geq c_2 K(t, \rho_x \vee \rho_y) 
 g_t(x,y). 
\end{align*}
\end{itemize}
\end{theorem}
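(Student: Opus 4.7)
The proof plan rests on the Feynman--Kac representation
\[
u_t(x,y) \;=\; g_t(x,y)\,\Ee^{x,y}_t\!\left[\exp\!\left(-\int_0^t V(B_s)\,ds\right)\right]
\]
together with the Chapman--Kolmogorov semigroup identity. The two quantities that build $K(t,\rho)$ and $\gamma_2$ have transparent probabilistic meaning: $V^*(\rho)$ is an upper bound for $V$ on $\overline{B}_{2\rho}$, and $\mu_0/(4\rho^2)$ is the principal Dirichlet eigenvalue of $-\Delta$ on $\overline{B}_{2\rho}$ (so that $\mu_0/4$ corresponds to $\overline{B}_2$, matching the $\mu_0/4$ inside $\gamma_2$).

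The plan for part~(2), the short-time regime $t\leq 4t_{\rho_x\vee\rho_y}$, is to restrict the underlying Brownian paths to $\overline{B}_{2\rho}$, where $\rho:=\rho_x\vee\rho_y$. Since $V \leq V^*(\rho)$ there, Feynman--Kac yields
\[
u_t(x,y) \;\geq\; e^{-V^*(\rho)\,t}\,p_t^{D,\overline{B}_{2\rho}}(x,y),
\]
with $p^{D}$ the killed-Brownian-motion kernel. Because $x,y \in \overline{B}_\rho$ sit at distance $\geq\rho$ from the Dirichlet boundary, a quantitative lower estimate for the killed kernel (valid in the Case~(2) time window) gives $p_t^{D,\overline{B}_{2\rho}}(x,y)\geq c\,e^{-c'\mu_0 t/(4\rho^2)}\,g_t(x,y)$, and multiplying reproduces $K(t,\rho)\,g_t(x,y)$ up to absolute constants absorbed into $c_2$.

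The plan for part~(1), the long-time regime $t\geq 4t_{\rho_x\vee\rho_y}$, is to apply Chapman--Kolmogorov with two intermediate points in the unit ball:
\[
u_t(x,y) \,\geq\, \int_{B_1}\!\int_{B_1} u_{\tau_x}(x,z_1)\,u_T(z_1,z_2)\,u_{\tau_y}(z_2,y)\,dz_1\,dz_2,
\]
for a judicious choice of $\tau_x,\tau_y\lesssim t_{\rho_x\vee\rho_y}$ keeping $T:=t-\tau_x-\tau_y$ comparable to $t$. The outer factors are handled by the already-established part~(2): for $z_i\in B_1$ the effective radius drops to $\rho_x$ (resp.\ $\rho_y$), and monotonicity of $K$ in time upgrades $K(\tau_x,\rho_x)$ to $K(t,\rho_x)$. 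The Gaussian tail $g_{\tau_x}(x,z_1)$ contributes an additional exponential whose exponent is a constant multiple of $\rho_x\sqrt{V^*(\rho_x)+\mu_0/(4\rho_x^2)}$; since the same quantity already appears inside $K(t,\rho_x)$ in the long-time branch, the cost is absorbed into a modified absolute constant. The middle factor is treated by a further application of Feynman--Kac restricted to $\overline{B}_2$ (where $V\leq V^*(1)$), combined with a Dirichlet lower bound on $\overline{B}_2$ (eigenvalue $\mu_0/4$); together with the unit-ball integrations this produces a factor comparable to $g_t(0,0)\,e^{-\gamma_2 t}$.

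The main technical hurdle is the derivation of a Dirichlet heat-kernel lower bound on a ball with the \emph{explicit} exponential rate $\mu_0/(4R^2)$, uniformly across scales $R\asymp\rho$ and $t\asymp t_\rho$, since this is what allows the $\mu_0/(4\rho^2)$ term inside $K(t,\rho)$ to materialise. A secondary combinatorial difficulty in part~(1) is that $\rho\mapsto t_\rho$ need not be monotone, so the choice of intermediate times $\tau_x,\tau_y$ that aligns the Gaussian penalties with the long-time branch of $K$ separately for $\rho_x$ and $\rho_y$ may require some case analysis.
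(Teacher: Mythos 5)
Your plan for Part (2) coincides with the paper's Lemma \ref{lem:aux-1}: restrict the Feynman--Kac functional to paths staying in $B_{2\rho}$, bound $V$ by $V^*(\rho)$ there, and invoke a sharp lower bound for the killed-ball kernel. The paper imports exactly the estimate you flag as the main hurdle from Małecki--Serafin \cite{malecki-serafin} (see \eqref{eq:lower_dir_balls}), which gives the optimal rate $\mu_0 t/r^2$ together with the unscaled Gaussian $g_t(x,y)$; with that black box your Part (2) argument is essentially the paper's.

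For Part (1) your route genuinely differs. The paper does \emph{not} use a two-sided Chapman--Kolmogorov split into $\tau_x+T+\tau_y$ with two landings in $B_1$; instead it proves an intermediate Lemma \ref{lem:aux-2} by splitting the Feynman--Kac expectation at the single time $t_{\rho_x}$, on the event $\{t_{\rho_x}<\tau_{B_{2\rho_x}},\, X_{t_{\rho_x}}\in B_{\rho_y}\}$, then applies Lemma \ref{lem:aux-1} (with $\rho=\rho_y$) to the remaining time, and only at the end symmetrizes by a single Chapman--Kolmogorov split at $t/2$ with $z\in B_1$. Crucially, because the second piece starts inside $B_{\rho_y}$ and ends at $y$, its Gaussian exponent is $|X_{t_{\rho_x}}-y|^2/(4(t-t_{\rho_x}))\le\rho_y^2/t_{\rho_x}\le\rho_x^2/t_{\rho_x}$, which is exactly the long-time branch of $K(t,\rho_x)$. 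Your decomposition instead leaves a middle factor $g_T(z_1,z_2)$ with $z_1,z_2\in B_1$ and $T\asymp t$, and here there is a genuine gap in the write-up: that factor contributes $e^{-|z_1-z_2|^2/(4T)}\ge e^{-c/t}$, and this is \emph{not} ``comparable to $g_t(0,0)e^{-\gamma_2 t}$'' as you assert --- as $t\to 0$ (which is admissible in the Part (1) regime, e.g.\ for large constant potentials) it is exponentially worse than $e^{-\gamma_2 t}$ and cannot be absorbed into $\gamma_2$. It can in fact be salvaged, but only by using the regime hypothesis $4t_{\rho_x\vee\rho_y}\le t$, which forces $\rho_x\sqrt{V^*(\rho_x)+\mu_0/(4\rho_x^2)}\ge 2/t$, so the surplus between your effective rate and the theorem's rate $9/2$ in front of this quantity dominates $e^{-c/t}$; this step is missing. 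Two further details you gloss over: the $t$-power prefactors $(4\pi\tau_x)^{-d/2}$ etc.\ must be converted into exponentials via $t^{-d/2}\ge e^{-(d/2)t}$ (this is where the $d$ in $\gamma_2=d+V^*(1)+\mu_0/4$ comes from), and in the subcase $4t_{\rho_y}>t\ge 4t_{\rho_x}$ you cannot take $\tau_y\asymp t_{\rho_y}$ (it may exceed $t$); one must take $\tau_y\lesssim t$ and then absorb the Gaussian penalty $\rho_y^2/\tau_y$ using $\rho_y\le\rho_x$ rather than via $K(t,\rho_y)$. The paper's one-sided-split Lemma \ref{lem:aux-2} sidesteps all of these issues simultaneously, which is why the proof there is shorter.
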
 

\noindent
The constant $c_2$ in the above theorem can be chosen as
\[
c_2= \frac{(\widetilde C/4)^4}{4^{d}\Gamma(d/2+1)^{3}},
\]
where $\widetilde C$ is the constant (independent of $V$) from the auxiliary estimate \eqref{eq:lower_dir_balls} below.

As before, the estimate in Theorem \ref{th:lower_main} is \emph{uniform} in $0 \leq V \in L^{\infty}_{\loc}(\R^d)$ -- the rate $9/4$ in $K(t,\rho)$ and the constant $c_2$ are independent of $V$, and the Gaussian term $g_t(x,y)$ in this bound is optimal.
This result is rather specialized to confining potentials for which it leads to the global estimate given in terms of the upper profile $V^{*}$ -- this estimate is the most informative for potentials with radial profiles. Together with Theorem \ref{th:main-1}, it also immediately gives the two-sided estimates, see Corollary \ref{cor:dec}. These bounds are qualitatively sharp for potentials that are comparable to radial monotone functions with sufficiently regular growth, see Examples \ref{ex:conf} and \ref{ex:conf_gen}. However, it still can be  useful  in less regular situations. For example, if $V$ is the potential with confining component in some unbounded subset $D$ of $\R^d$, then our result may give the sharp lower estimate for $x, y \in D$. 
On the other hand, due to appearance of the exponential time rate which is always present in our bound, the theorem seems to be useless for decaying potentials. In this case, it just leads to an obvious estimate $u_t(x,y) \geq e^{-Ct} g_t(x,y)$.

Observe that the estimate in Part (1) of Theorem \ref{th:lower_main} can be equivalently rewritten as 
\[
u_t(x,y) \geq c_2 \, e^{-\gamma_2 t} \, g_t(0,0)\, K(t,\rho_x \vee \rho_y) \, K(t,\rho_x \wedge \rho_y),
\]
and the term $K(t,\rho_x \vee \rho_y)$ is already determined by the assumption $4t_{\rho_x \vee \rho_y} \leq t$. 

Also, we note that in Part (2) we get the estimate with the kernel $K$ in which the rate $9/4$ is replaced by $1$. However, for more clarity, in the statement we keep the same rate in both Parts (1) and (2).

\subsection*{Structure of our estimates and the uniform rates and constants. }

Our estimates show a factorization:\ the terms $H$ and $K$ are separate for each spatial variable, and if $\lambda_0>0$, then the additional decay in time is present as well. The interaction between spatial variables is described by the Gaussian kernel -- optimal in the lower bound, and  nearly optimal  in the upper bound.   

The structure of $H$ and $K$ is exactly the same; for small spatial variables these functions are less than $1$ and bounded away from zero, uniformly in $t>0$. However, for large $z$ and $t>0$, they clearly show a competition between two factors 
\begin{align}\label{eq:competition}
\exp\left(-c \left(V(z)+\frac{\mu_0}{4|z|^2}\right)t\right) \qquad \text{and} \qquad \exp\left(-2c|z| \sqrt{V(z)+\frac{\mu_0}{4|z|^2}}\right).
\end{align}
At the technical side, the contribution of the potential is described by the profiles $V_*$ and $V^*$, and the constant $c$ is equal to $9/4$ in the lower bound, and $\sqrt{2}/32$ in the upper bound. Of course, these numerical constants are not optimal, but they are explicit and absolute numbers, independent of the potential $V$ and the dimension $d$; the dependence on $d$ is expressed through $\mu_0$ only, and  our proofs show that the position of the term $\mu_0/|x|^2$, at least in the first expression in \eqref{eq:competition},  is correct and sharp. This term plays an important role in the proof of the lower estimate. Of course, in the case of confining potentials, the expression $V(z)+\mu_0/(4|z|^2)$ can be replaced by $V(z)$ for large $z$'s, but this is possible only at the cost of the constant depending on $V$ and $d$ -- this would destroy the absoluteness of the  rate  $9/4$ appearing in the lower bound. Also the multiplicative constants $c_1, c_2$ are independent of $V$ ($c_1, c_2$ depend on $d$ only). In this sense our results are \emph{uniform} with respect to $V$. On the other hand, the time rates $\gamma_1$, $\gamma_2$, giving the estimates of $\lambda_0$ (in the confining case $\lambda_0$ is the ground state eigenvalue), are necessary dependent on the potential and the dimension.

The effect of the competition in \eqref{eq:competition} strongly depends on the type of the potential and the rate of its growth or decay at infinity.  Further discussion in Section \ref{sec:applications} will be divided into separate parts, including confining and decaying cases.  

\subsection*{A few words about the proofs.} \label{sebsec:proofs}
Our proofs are based on the Feynman--Kac representation of the Schr\"odinger semigroup $\big\{e^{-tH}: t \geq 0\big\}$ with respect to Brownian motion, some general estimates for the exit time of this process from a ball, and the recent sharp estimate for the semigroup of the corresponding killed process. We  develop  a new approach which allows us to find a rather direct and short argument.  

 Theorem \ref{th:main-1} follows from more general Theorem \ref{th:upper_final}. The proof of the latter result starts in a rather standard way, by dividing the expectation defining the kernel into two terms with respect to the exit time from a ball centered at the starting position $x$, with the radius proportional to the norm of $x$. Further, it uses the natural monotonicity properties of the exponential function, the H\"older inequality, the semigroup property (the Chapman--Kolmogorov property) and the specific structure of the Gauss--Weierstrass kernel. The key step in the proof is based on the observation that the Laplace transform of the exit time from a ball mentioned above, evaluated at $\lambda = V(x)$ (formally we see here the lower profile of the potential) takes the form of the second term appearing in \eqref{eq:competition}. We use for this the classical result of Wendel \cite{Wendel}. Interestingly, this is exactly the shape of the ground state $\varphi_0$ of the Schr\"odinger operator $H$ \cite{Simon-1, Carmona}. These steps are made in Lemmas \ref{lem:upper} and \ref{lem:Wendel}.

We conclude the proof of Theorem \ref{th:upper_final} by symmetrizing the estimate obtained in Lemma \ref{lem:upper}, through the Chapman--Kolmogorov (semigroup) property -- this leads us to the factorization of the estimate in a very natural way. The estimate with the exponential correction in time follows from the fact that the $L^2$-norm of the operator $e^{-tH}$ is equal to $e^{-\lambda_0 t}$. We note, however, that in many situations $\lambda_0$ need not be a positive number. 

Proving Theorem \ref{th:lower_main} seems to be a more challenging problem, because we want to find the lower estimate matching the upper bound.  More precisely,  we want to get a factorization and keep the same structure of the terms appearing in the estimate. First, in Lemma \ref{lem:aux-1}, we prove a general estimate which covers Part (2) of the theorem. The structure of the upper bound and the estimate in this lemma suggest the structure of $t_{\rho_x}$ -- the space-dependent threshold time which decides about the shape of the estimate. We observe that the correct form of the estimate, matching the upper bound, is determined by the position of the time $t$ with respect to $t_{\rho_x \vee \rho_y}$. The key technical step is made in Lemma \ref{lem:aux-2} which allows us to get  Part (1)  of the theorem. The tricky estimates in the proof of this lemma shows in a clear way the play between the time and the spatial variables. One can say that the key idea used in the proof of the lower bound is to reduce the problem to estimating the semigroup of the process in a ball, without losing too much information. Interestingly, we do not use the joint distribution of the exit position and the exit time of the process from a ball. The only tool we need is the lower estimate for the kernel of the semigroup of the killed process with sharp Gaussian term, obtained just recently by Małecki and Serafin \cite{malecki-serafin}.

\section{Preliminaries} \label{sec:prelim}
Let $(X_t)_{t \geq 0}$ be the Brownian motion running at twice the speed, with values in $\R^d$, $d \geq 1$, over a probability space $(\Omega,\mathcal F, \pr)$. This is the stochastic process with continuous paths, starting from $0$, such that
\[
\pr(X_t \in dy) = \frac{1}{(4\pi t)^{d/2}} \exp\left(-\frac{|y|^2}{4t}\right)dy, \quad t>0,
\]
Note that the process $(X_t)_{t \geq 0}$ has the scaling property:\ $X_{at}$ has the same distribution as $
\sqrt{a} X_t$, $a>0$. 
We denote by $\pr_x$ the probability measure for the process starting from $x \in \R^d$, i.e.
\[
\pr_x(X_t \in dy) := \pr(X_t+x \in dy) = g_t(x,y) dy
\] 
where 
\[
g_t(x,y) = \frac{1}{(4\pi t)^{d/2}} \exp\left(-\frac{|y-x|^2}{4t}\right)
\]
and by $\ex_x$ -- the expected value with respect to $\pr_x$. We use the notation $\ex_x \left[F;\, A \right] = \int_A F d\pr_x$. 

Let $0 \leq V \in L^{\infty}_{\loc}(\R^d)$ and consider the Schr\"odinger operator $H=-\Delta+V$. The semigroup operators $e^{-tH}$, $t>0$, can be represented via the Feynman--Kac formula:
\[
e^{-tH} f(x) = \ex_x \left[\exp\left(-\int_0^t V(X_s)ds\right) f(X_t)\right], \quad f \in L^2(\R^d,dx),
\]
see Simon \cite{Simon-2}, Demuth and van Casteren \cite{Demuth-Casteren} or Chung and Zhao \cite{Chung-Zhao}. Consequently, the corresponding integral kernels $u_t(x,y)$ can be expressesd as
	\begin{align} \label{eq:lim_kernel}
		u_t(x,y) = \lim\limits_{s \nearrow t} \Ee_x \left[ e^{-\int_0^s V(X_u)du} g_{t-s}(X_s,y) \right], \quad x, y \in \R^d, \ t>0,
	\end{align}
see \cite[Proposition 2.7]{Demuth-Casteren}. In particular, $u_t(x,y) \leq g_t(x,y)$. 

Let $\tau_D := \inf\{t \geq 0: X_t \notin D\}$ be the first exit time of the process $(X_t)_{t \geq 0}$ from an open and bounded set $D \subset \R^d$. In this paper we mainly consider the case when $D=B_r$, $r>0$. 
Recall that the transition semigroup of the Brownian motion killed upon exiting $B_r$ consists of operators 
\[
G^{B_r}_t f(x) = \ex_x [f(X_t); t < \tau_{B_r}] = \int_{B_r} f(y) g^{B_r}_t(x,y)dy, \quad f \in L^2(B_r,dy), \ t>0,
\]
with continuous and bounded transition densities $g^{B_r}_t(x,y)$. Due to scaling property, it is sufficient to analyze the case $r=1$. All these operators are of Hilbert--Schmidt type; in particular, spectra of $ G^{B_1}_t$, $t>0$, consist of eigenvalues $e^{-\mu_n t}$, where the numbers $0<\mu_0 < \mu_1 \leq \mu_2 \leq \ldots \to \infty$ are eigenvalues of $-\Delta_{B_1}$, the (positive) Dirichlet Laplacian on the ball $B_1$. The operator $\Delta_{B_1}$ is the infinitesimal generator of the semigroup $\big\{G^{B_1}_t:\, t \geq 0 \big\}$. The number $\mu_0 := \inf \spec (-\Delta_{B_1})$ is called the ground state  (or principal)  eigenvalue; the corresponding eigenfunction $\phi_0 \in L^2(B_1,dx)$ is bounded, continuous and strictly positive on $B_1$.  

It is known that due to intrinsic ultracontractivity (see Davies and Simon \cite{Davies-Simon,Davies-book}), there exists a constant $c>0$ such that
\[
g^{B_1}_t(x,y) \leq c e^{-\mu_0 t} \phi_0(x) \phi_0(y), \quad x,y \in B_1, \ t \geq 1.
\]
By integrating on both sides of this inequality over $y \in B_1$, we get
\[
\pr_x(t < \tau_{B_1}) \leq c  e^{-\mu_0 t} \phi_0(x)\left\|\phi_0\right\|_1, \quad x \in B_1, \ t \geq 1.
\]
Consequently, we have the following estimate
\begin{align} \label{eq:C_0_def}
	\pr_0(t < \tau_{B_1}) \leq C_0 e^{-\mu_0 \, t}, \quad t>0,
	\end{align}
where $C_0:= e^{\mu_0} \vee c \, \phi_0(0) \left\|\phi_0\right\|_1 > 1$.
  	
	\section{The proof of the upper bound} \label{sec:upper}
	
Throughout this section we use the following constants:\ for $a>1$ we define
\[
		 C_1=C_1(a):= \left(2\left(C_0 \vee C\right)^{\frac{(a-1)}{a}} a^{\frac{d}{2}}\left(\frac{a}{a-1}\right)^{\frac{(a-1)d}{2a}}\right)^2
\]
(the constant $C_0$ comes from \eqref{eq:C_0_def}  above  and $C$ (independent of $a$ and $V$) is determined in Lemma \ref{lem:Wendel} below) 
and 
\[ 
		C_2 = C_2(a):= \frac{1}{2a}, \quad C_3=C_3(a):=\frac{2\mu_0(a-1)}{a^2},  \quad C_4=C_4(a):=\frac{1}{4}\sqrt{\frac{a-1}{a}}.
\]
We first prove the most general Theorem \ref{th:upper_final}, and then we come back to  Theorem \ref{th:main-1}. 
		\begin{theorem}\label{th:upper_final}
		Let $0 \leq V \in L^{\infty}_{\loc}(\R^d)$ and let $x,y \in \R^d$, $t >0$, $a>1$. Then 
		\begin{equation*}
			u_t(x,y) \leq C_1 \, h(t,x) \, h(t,y) \, g_{at}(x,y)
		\end{equation*}
		and
		\begin{equation*}
			u_t(x,y) \leq \sqrt{C_1}	\,  \frac{1}{(2a\pi t)^{d/2}} \, \exp\left(-\frac{\lambda_0}{2} t\right)  \,  \sqrt{h(t,x)} \, \sqrt{h(t,y)}, 
		\end{equation*}
		where 
		\[
		h(t,x):= \exp\left(-\left(\left(C_2 V_*(x)+\frac{C_3}{|x|^2}\right)t \, \wedge \, C_4 \sqrt{V_*(x)} \, |x|\right)\right). 
		\]
	\end{theorem}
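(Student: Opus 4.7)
The plan is to first establish a non-symmetric upper bound by Feynman--Kac (this will be Lemma \ref{lem:upper}), and then derive both parts of the theorem by combining it with the Chapman--Kolmogorov identity (for the first estimate) and with the $L^2$-operator norm bound $\|e^{-tH}\|_{L^2\to L^2}=e^{-\lambda_0 t}$ (for the second).

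For the non-symmetric step I would start from \eqref{eq:lim_kernel}, restrict to $x\neq 0$ (the case $x=0$ is covered by $u_t\le g_t$), and split the Feynman--Kac expectation
\[
\Ee_x\Bigl[e^{-\int_0^s V(X_u)du}\,g_{t-s}(X_s,y)\Bigr]
\]
according to the exit time $\tau:=\tau_{B_{|x|/2}(x)}$ of Brownian motion from the ball of radius $|x|/2$ centred at $x$. On $\{s<\tau\}$ the path stays in $\overline{B}_{|x|/2}(x)$, so $V(X_u)\ge V_*(x)$, yielding the factor $e^{-V_*(x)s}$, and the survival estimate \eqref{eq:C_0_def} combined with Brownian scaling bounds $\pr_x(s<\tau)$ by a multiple of $e^{-4\mu_0 s/|x|^2}$; together these produce the first term of \eqref{eq:competition}. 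On $\{\tau\le s\}$ the strong Markov property at $\tau$ and Wendel's formula (Lemma \ref{lem:Wendel}) applied to $\Ee_x[e^{-V_*(x)\tau}]$ produce the second term of \eqref{eq:competition}, namely $Ce^{-C_4\sqrt{V_*(x)}\,|x|}$. The residual Gaussian factor $g_{t-s}(X_s,y)$ is turned into $g_{a(t-s)}(x,y)$ (up to a multiplicative constant) by a Hölder-type argument based on the elementary quadratic inequality
\[
|X_s-y|^2\ \ge\ \tfrac{1}{a}\,|x-y|^2\ -\ \tfrac{1}{a-1}\,|X_s-x|^2,\qquad a>1,
\]
the excess exponential being harmless on $\{s<\tau\}$ since $|X_s-x|\le|x|/2$ there. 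Passing to the limit $s\nearrow t$ in \eqref{eq:lim_kernel} then yields a non-symmetric bound of the form $u_t(x,y)\le\sqrt{C_1}\,\widetilde h(t,x)\,g_{at}(x,y)$, where $\widetilde h(t,x)$ has precisely twice the rates of $h(t,x)$ in each of its two competing exponents.

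With the non-symmetric bound in hand, the first inequality follows from the semigroup identity $u_t(x,y)=\int u_{t/2}(x,z)\,u_{t/2}(z,y)\,dz$: apply the non-symmetric bound to both factors (using the symmetry $u_{t/2}(z,y)=u_{t/2}(y,z)$ on the second one), and then the Gaussian convolution $\int g_{at/2}(x,z)\,g_{at/2}(z,y)\,dz=g_{at}(x,y)$; the doubled rates of $\widetilde h$ are exactly halved by the time-halving, so $\widetilde h(t/2,\cdot)\le h(t,\cdot)$, which one checks elementarily by splitting on which argument of the $\wedge$ attains the minimum. For the second inequality I would use the $L^2$-splitting
\[
u_t(x,y)\ =\ \bigl\langle u_{t/4}(x,\cdot),\,e^{-tH/2}\,u_{t/4}(y,\cdot)\bigr\rangle_{L^2(\R^d)},
\]
Cauchy--Schwarz and the operator norm bound to obtain $u_t(x,y)\le e^{-\lambda_0 t/2}\sqrt{u_{t/2}(x,x)\,u_{t/2}(y,y)}$, and then apply the non-symmetric diagonal estimate: the Gaussian produces the prefactor $g_{at/2}(x,x)=(2a\pi t)^{-d/2}$, while $\widetilde h(t/2,x)\le h(t,x)$ and the overall square root produce the constant $\sqrt{C_1}$ and the factor $\sqrt{h(t,x)h(t,y)}$ demanded by the statement.

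I expect the main obstacle to be the non-symmetric step: the choice of splitting radius $|x|/2$, the value $\lambda=V_*(x)$ in Wendel's formula, the use of \eqref{eq:C_0_def} after Brownian rescaling, and the $a$-dependent quadratic inequality must all be calibrated simultaneously so that the two terms competing in \eqref{eq:competition} emerge with the precise constants $C_2, C_3, C_4$ and with the Gaussian parameter upgraded from $t$ to $at$. Once that calibration is in place, the symmetrisation, the Cauchy--Schwarz step, and the verification $\widetilde h(t/2,\cdot)\le h(t,\cdot)$ are essentially routine bookkeeping.
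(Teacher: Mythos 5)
Your two-step plan --- a non-symmetric bound as in Lemma~\ref{lem:upper}, followed by symmetrisation via the semigroup identity for the first estimate and via $\|e^{-tH}\|_{2,2}=e^{-\lambda_0 t}$ combined with Cauchy--Schwarz for the second --- matches the paper's structure exactly, and both symmetrisation steps are correct as you describe them. (A minor bookkeeping slip: the non-symmetric bound doubles only the time-linear argument of the $\wedge$, not both; your inequality $\widetilde h(t/2,\cdot)\le h(t,\cdot)$ holds anyway.)

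The gap is in the non-symmetric step itself. You propose to work directly with \eqref{eq:lim_kernel}, split on $\tau=\tau_{B_{|x|/2}(x)}$, apply the quadratic inequality $|X_s-y|^2\ge\tfrac1a|x-y|^2-\tfrac1{a-1}|X_s-x|^2$ pointwise to $g_{t-s}(X_s,y)$, and then pass to the limit $s\nearrow t$. But the excess factor this produces, $\exp\!\bigl(|X_s-x|^2/(4(a-1)(t-s))\bigr)$, is not harmless: even with $|X_s-x|\le|x|/2$ it is of size $\exp\!\bigl(|x|^2/(16(a-1)(t-s))\bigr)$, which diverges as $s\nearrow t$ for $x\ne0$ while $\pr_x(s<\tau)$ stays bounded away from zero. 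Moreover the intermediate kernel $g_{a(t-s)}(x,y)$ degenerates to a Dirac mass as $s\nearrow t$, so the limit cannot produce the factor $g_{at}(x,y)$ you want. The paper avoids this by splitting the time \emph{before} invoking Feynman--Kac: with $b=a/(a-1)$ one writes $u_t(x,y)=\int u_{t/a}(x,z)u_{t/b}(z,y)\,dz=\Ee_x\bigl[e^{-\int_0^{t/a}V(X_s)ds}\,u_{t/b}(X_{t/a},y)\bigr]$, so the residual kernel $u_{t/b}(X_{t/a},y)\le g_{t/b}(X_{t/a},y)$ sits at a fixed time $t/b>0$ and never degenerates. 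One then splits on $\{t/a<\tau\}$ versus $\{t/a\ge\tau\}$ exactly as you envision, applies H\"older's inequality with exponents $a$ and $b$, and handles the Gaussian factor by the clean identity-based computation
\[
\Ee_x\bigl[(g_{t/b}(X_{t/a},y))^a\bigr]^{1/a}\le\left(\tfrac{b}{4\pi t}\right)^{(a-1)d/(2a)}\Bigl(\int g_{t/a}(x,z)g_{t/b}(z,y)\,dz\Bigr)^{1/a}=a^{d/2}b^{(a-1)d/(2a)}g_{at}(x,y),
\]
which produces $g_{at}$ with no divergent excess. The remaining H\"older factors are $\pr_x(t/a<\tau)^{1/b}$, controlled by \eqref{eq:C_0_def} and scaling, and $\Ee_x[e^{-bV_*(x)\tau}]^{1/b}$, controlled by Lemma~\ref{lem:Wendel} at $\lambda=bV_*(x)$; these give the two competing terms of \eqref{eq:competition}. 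So all of your ingredients are the right ones, but the Chapman--Kolmogorov split $t=t/a+t/b$ must come first; there is no limit $s\nearrow t$ to pass.
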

	We use the convention that for $x=0$ we have $1/|x|^2 = +\infty$, so that $h(t,0) = 1$.
	The proof of the theorem will be given after a sequence of lemmas. We start with a remark.
	
	\begin{remark} \label{rem:upper_near_opt}
	The constants appearing in the estimates in Theorem \ref{th:upper_final} are not optimal, but they are explicit. In many cases, it is enough to take e.g.\ $a=2$. Nevertheless, depending on applications, one can choose the parameter $a$ to be arbitrarily close to $1$, which makes the Gaussian term $g_{at}(x,y)$ nearly optimal. Recall that the lower bound in Theorem \ref{th:lower_main} holds with $g_{t}(x,y)$. Note, however, that $C_3, C_4 \downarrow 0$ as $a \downarrow 1$.	
\end{remark}

	\begin{lemma}\label{lem:Wendel}  There exists a constant $C = C(d) \geq 1$ such that for every $\lambda \geq 0$ and $r>0$ we have 
		\begin{equation*}
			 \ex_0 \left[e^{-\lambda \tau_{B_r}}\right]  \leq C e^{-\frac{\sqrt{\lambda}r}{2}}.
		\end{equation*}
	\end{lemma}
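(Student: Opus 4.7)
The plan is to reduce the estimate to the unit ball by the scaling property, and then evaluate the resulting Laplace transform by means of the classical formula of Wendel, controlled with standard lower bounds on modified Bessel functions.

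First, I would exploit the scaling property of $(X_t)_{t\geq 0}$ noted in Section~\ref{sec:prelim}: the processes $(X_{r^2 t})_{t\geq 0}$ and $(r X_t)_{t\geq 0}$ share the same law under $\pr_0$, so $\tau_{B_r}$ has the same distribution as $r^2 \tau_{B_1}$. Setting $\mu:=\lambda r^2$, the lemma reduces to producing $C=C(d)\geq 1$ with
\[
\ex_0\bigl[e^{-\mu \tau_{B_1}}\bigr] \leq C\, e^{-\sqrt{\mu}/2}, \qquad \mu\geq 0,
\]
after which we substitute back $\sqrt{\mu}/2 = \sqrt{\lambda}\, r/2$.

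Second, I would invoke Wendel's explicit formula \cite{Wendel}. The function $u(x):=\ex_x[e^{-\mu \tau_{B_1}}]$ is the unique bounded solution of $-\Delta u + \mu u = 0$ in $B_1$ with $u\equiv 1$ on $\partial B_1$. Passing to polar coordinates, radial symmetry converts the equation into the modified Bessel equation, and selecting the solution regular at the origin yields
\[
\ex_0\bigl[e^{-\mu \tau_{B_1}}\bigr] = \frac{(\sqrt{\mu}/2)^{\nu}}{\Gamma(\nu+1)\, I_\nu(\sqrt{\mu})}, \qquad \nu = \frac{d-2}{2}.
\]

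Third, I would lower-bound $I_\nu(\sqrt{\mu})$ and conclude. The power-series expansion gives $I_\nu(z)\geq (z/2)^\nu/\Gamma(\nu+1)$ for $\nu\geq 0$ (with the explicit identity $I_{-1/2}(z)=\sqrt{2/(\pi z)}\cosh z$ handling the dimension~$1$ case $\nu=-1/2$), and the Hankel asymptotic $I_\nu(z)\sim e^z/\sqrt{2\pi z}$ as $z\to\infty$ upgrades to a uniform estimate $I_\nu(z)\geq c_\nu e^{z}/(1+\sqrt{z})$ valid for all $z>0$. Plugging this in produces
\[
\ex_0\bigl[e^{-\mu \tau_{B_1}}\bigr] \leq C_d\, \bigl(1 + \mu^{(d-1)/4}\bigr)\, e^{-\sqrt{\mu}},
\]
and since $\sup_{\mu \geq 0} \mu^{(d-1)/4}\, e^{-\sqrt{\mu}/2} < \infty$, one half of the exponent absorbs the polynomial correction, giving the required bound with $C=C(d)$.

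The main obstacle is purely technical: extracting a clean, dimension-aware lower bound on $I_\nu$ uniformly in $z>0$. This is exactly why the exponent in the statement is $\sqrt{\lambda}\,r/2$ rather than $\sqrt{\lambda}\,r$ -- splitting $\sqrt{\mu}$ into $\sqrt{\mu}/2+\sqrt{\mu}/2$ lets one dispose of the polynomial prefactor automatically, keeping the full dimensional dependence confined to the multiplicative constant $C(d)$.
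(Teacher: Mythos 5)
Your overall strategy coincides with the paper's: reduce to the unit ball by scaling, invoke Wendel's explicit formula expressing the Laplace transform of $\tau_{B_1}$ in terms of the modified Bessel function $I_\nu$ with $\nu=(d-2)/2$, and then lower-bound $I_\nu$ to pull out the exponential decay, sacrificing half of the exponent to absorb the polynomial prefactor. (The paper treats $d=1$ via the one-dimensional first-passage formula rather than the identity $I_{-1/2}(z)=\sqrt{2/(\pi z)}\cosh z$, but that is cosmetic.) There is, however, a genuine gap: the ``uniform estimate'' $I_\nu(z)\geq c_\nu\, e^{z}/(1+\sqrt{z})$ for \emph{all} $z>0$ is false whenever $\nu>0$, i.e.\ for every $d\geq 3$. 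As $z\to 0^+$ one has $I_\nu(z)\sim (z/2)^\nu/\Gamma(\nu+1)\to 0$, while $e^{z}/(1+\sqrt{z})\to 1$, so no constant $c_\nu>0$ can work near the origin. Since you plug this purported uniform bound directly into Wendel's formula to produce the intermediate inequality $\ex_0[e^{-\mu\tau_{B_1}}]\leq C_d\,(1+\mu^{(d-1)/4})\,e^{-\sqrt{\mu}}$, that derivation collapses for $d\geq 3$, even though the intermediate inequality itself happens to be true (near $\mu=0$ it holds for the trivial reason that the left side is at most $1$).

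The repair is a two-regime argument, and you already have both ingredients in hand. For $z\geq z_0(d)$, the Hankel asymptotics $I_\nu(z)\sim e^{z}/\sqrt{2\pi z}$ gives
\[
\frac{(z/2)^{\nu}}{\Gamma(\nu+1)\,I_\nu(z)}\leq e^{-z/2},
\]
after absorbing the polynomial by half of the exponent, exactly as you propose. For $0<z\leq z_0$, the power-series bound $I_\nu(z)\geq (z/2)^\nu/\Gamma(\nu+1)$ (or simply the fact that a Laplace transform of a nonnegative random variable is $\leq 1$) yields that the same quantity is $\leq 1\leq e^{z_0/2}\,e^{-z/2}$. Combining the two regimes gives the claim with $C=e^{z_0/2}$. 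This is precisely the structure of the paper's proof; your write-up should replace the single ``uniform'' Bessel bound with this case split.
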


	\begin{proof}
	First note that for $\lambda = 0$ the equality is trivial. Assume that $\lambda>0$. If $d=1$, then the assertion simply follows from the classical formula for the Laplace exponent of the first passage time for the one-dimensional Brownian motion, see e.g.\ \cite[Theorem 5.13]{SP12}. If $d \geq 2$, then we use the result of Wendel \cite[Eq.\ (4)]{Wendel} which says that 
	\begin{align}\label{eq:Wendel}
			\ex_0 \left[e^{-\lambda \tau_{B_r}}\right] =\frac{1}{2^{\frac{d-2}{2}}\Gamma\big(\frac{d}{2}\big)} \frac{\left(\sqrt{\lambda} r \right)^\frac{d-2}{2}}{I_{\frac{d-2}{2}}\left(\sqrt{\lambda}r\right)},
		\end{align}
		where 
	\begin{align} \label{eq:Bessel-1}
		I_{\nu}(u) = \sum_{k=0}^{\infty} \frac{\left(u/2\right)^{\nu+2k}}{k! \, \Gamma(\nu+k+1)}, \quad u, \nu \geq 0,
		\end{align}
		is the modified Bessel function of the first kind, see e.g.\ \cite[10.25.2]{NIST:DLMF}. Note that Wendel considered the standard Brownian motion while we work with the Brownian motion running at twice the speed, and $\tau_{B_r}$ is the exit time of our process. Therefore the right hand side of \eqref{eq:Wendel} is the original formula of Wendel with $r$ replaced by $r/\sqrt{2}$  (or, equivalently, with $\lambda$ replaced by $\lambda/2$). 
		
		It is known (see \cite[10.30.4]{NIST:DLMF}) that
		\[
		 \lim_{u \to \infty} \frac{I_\nu(u)\sqrt{2\pi u}}{e^u} = 1, \quad \nu \geq 0.
		\]
		This implies that there exists $u_0 = u_0(d) >0$ such that 
		\[
		\frac{u^\frac{d-2}{2}}{2^{\frac{d-2}{2}}\Gamma\big(\frac{d}{2}\big)I_{\frac{d-2}{2}}(u)} \leq e^{-u/2}, \quad u \geq u_0.
		\]
		On the other hand, by keeping only the term for $k=0$ in the series \eqref{eq:Bessel-1}, we get
		\[
		\frac{u^\frac{d-2}{2}}{2^{\frac{d-2}{2}}\Gamma\big(\frac{d}{2}\big)I_{\frac{d-2}{2}}(u)} \leq 1 \leq e^{u_0/2}e^{-u/2}, \quad u \in (0,u_0].
		\]
	 The assertion of the lemma follows then from the representation \eqref{eq:Wendel} with $C=e^{u_0/2}$. 
	\end{proof}

\begin{remark}
By direct inspection of the last lines of the proof, we can easily obtain the estimate of Lemma \ref{lem:Wendel} in the form:\ for every $\varepsilon \in (0,1)$ there exists a constant $C=C(d,\varepsilon) \geq 1$ such that
\begin{equation*}
			 \ex_0 \left[e^{-\lambda \tau_{B_r}}\right]  \leq C e^{-(1-\varepsilon)\sqrt{\lambda}r},
		\end{equation*}
i.e.\ the constant in the exponent can be arbitrarily close to $1$ at the cost of the multiplicative constant $C$.
For clarity, we keep the statement and the proof in the present, simple form.
\end{remark}

	\begin{lemma}\label{lem:upper}
		Let $0 \leq V \in L^{\infty}_{\loc}(\R^d)$ and let $x,y \in \R^d$, $t >0$, $a>1$. Then 
		\begin{equation*}
			u_t(x,y) \leq \sqrt{C_1}\exp\left(-\left(2\left(C_2 V_*(x)+\frac{C_3}{|x|^2}\right)t \, \wedge \, C_4 \sqrt{V_*(x)} \, |x|\right)\right) g_{at}(x,y).
		\end{equation*}
	\end{lemma}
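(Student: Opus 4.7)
The plan is to combine the semigroup (Chapman--Kolmogorov) property of $u_t$ with a Feynman--Kac representation and H\"older's inequality. For any $s \in (0, t)$, using $u_{t-s}(z,y) \leq g_{t-s}(z,y)$ (valid because $V \geq 0$),
\[
u_t(x,y) = \int u_s(x,z)\, u_{t-s}(z,y)\, dz \leq \int u_s(x,z)\, g_{t-s}(z,y)\, dz = \Ee_x\!\left[e^{-\int_0^s V(X_u)\,du}\, g_{t-s}(X_s,y)\right],
\]
the last equality being the Feynman--Kac formula applied to the function $z \mapsto g_{t-s}(z,y)$. This bypasses the degenerate limit in \eqref{eq:lim_kernel} and leaves $s$ free. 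Setting $B := B_{|x|/2}(x)$ and using $V \geq V_*(x)$ on $B$ gives the pathwise bound $\int_0^s V(X_u)\, du \geq V_*(x)(s \wedge \tau_B)$. Apply H\"older's inequality with conjugate exponents $p = a$ and $q = a/(a-1)$:
\[
u_t(x,y) \leq \bigl(\Ee_x[g_{t-s}(X_s,y)^a]\bigr)^{1/a}\, \bigl(\Ee_x[e^{-qV_*(x)(s \wedge \tau_B)}]\bigr)^{1/q}.
\]

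For the Gaussian factor, use the uniform estimate $g_{t-s}(z,y)^{a-1} \leq (4\pi(t-s))^{-d(a-1)/2}$ (since $g_{t-s}(\cdot, y)$ attains its maximum at $y$) together with $\Ee_x[g_{t-s}(X_s, y)] = g_t(x, y)$ (the Brownian semigroup property) to obtain
\[
\bigl(\Ee_x[g_{t-s}(X_s,y)^a]\bigr)^{1/a} \leq (4\pi(t-s))^{-d(a-1)/(2a)}\, g_t(x,y)^{1/a}.
\]
Rewriting $g_t(x,y)^{1/a}$ in terms of $g_{at}(x,y)$ gives $a^{d/2}\,(t/(t-s))^{d(a-1)/(2a)}\,g_{at}(x,y)$, so that the eventual choice $s = t/a$ (for which $t/(t-s) = a/(a-1)$) yields the clean bound $a^{d/2}(a/(a-1))^{d(a-1)/(2a)}\, g_{at}(x,y)$.

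For the exponential factor, I split
\[
\Ee_x\bigl[e^{-qV_*(x)(s \wedge \tau_B)}\bigr] = e^{-qV_*(x)s}\Pp_x(s < \tau_B) + \Ee_x\bigl[e^{-qV_*(x)\tau_B};\, \tau_B \leq s\bigr].
\]
By Brownian scaling $\tau_B$ starting at $x$ has the same law as $\tau_{B_{|x|/2}}$ starting at $0$, and \eqref{eq:C_0_def} gives $\Pp_x(s < \tau_B) \leq C_0 e^{-4\mu_0 s/|x|^2}$; the second piece is dominated (by dropping the event) via Lemma~\ref{lem:Wendel} by $C \exp(-\sqrt{qV_*(x)}\,|x|/4)$. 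Combining these via $e^{-A}+e^{-B} \leq 2 e^{-(A \wedge B)}$, raising to the power $1/q = (a-1)/a$, and substituting $s = t/a$, the two exponents unfold to exactly $(2C_2 V_*(x) + 2C_3/|x|^2)t$ and $C_4 \sqrt{V_*(x)}\,|x|$, yielding the factor $(2(C_0 \vee C))^{(a-1)/a} \exp\!\bigl(-[(2C_2 V_*(x) + 2C_3/|x|^2)t \wedge C_4 \sqrt{V_*(x)}\,|x|]\bigr)$. Multiplying the two factors completes the proof with constant $\sqrt{C_1} = 2(C_0\vee C)^{(a-1)/a} a^{d/2}(a/(a-1))^{d(a-1)/(2a)}$.

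The main delicate point is the algebraic bookkeeping that pins down the H\"older exponent $p = a$ and the split time $s = t/a$; both choices are essentially forced by the requirement that $1/q = (a-1)/a$ and that the coefficients of $V_*(x)t$ and $t/|x|^2$ in the exponent come out as $2C_2 = 1/a$ and $2C_3 = 4\mu_0(a-1)/a^2$ respectively. The degenerate case $x = 0$ (where $B$ is a point and the right-hand side is vacuous) is handled directly by $u_t(0,y) \leq g_t(0,y) \leq a^{d/2}\, g_{at}(0,y) \leq \sqrt{C_1}\, g_{at}(0,y)$, consistent with the convention $1/|x|^2 = +\infty$ making the minimum in the exponent equal to $0$.
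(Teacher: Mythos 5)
Your proof is correct and follows essentially the same route as the paper: the Chapman--Kolmogorov split at time $s=t/a$, the Feynman--Kac representation, H\"older with exponents $a$ and $a/(a-1)$, Brownian scaling together with \eqref{eq:C_0_def} for the $\Pp_x(s<\tau_B)$ tail, and Lemma~\ref{lem:Wendel} for the Laplace transform of $\tau_B$; the only cosmetic difference is that you apply H\"older once to the combined pathwise bound $\int_0^s V(X_u)\,du \geq V_*(x)(s\wedge\tau_B)$ and then split, whereas the paper splits into the events $\{t/a<\tau_B\}$ and $\{t/a\geq\tau_B\}$ first and then applies H\"older to each piece, arriving at the same constants.
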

	
	\begin{proof}
	Let $x,y \in \R^d$, $t>0$ and let $a>1$. If $x=0$, then $e^{-C_4\sqrt{V_*(x)}|x|}$=1. Consequently, 
	\[
	u_t(x,y) \leq g_t(x,y) \leq a^{d/2} g_{at}(x,y) = a^{d/2} e^{-C_4 \sqrt{V_*(x)}|x|} g_{at}(x,y)
	\]
	and the estimate trivially holds. Let $x \neq 0$ and denote $b=a/(a-1)$ so that $1/a+1/b=1$. 
	We have
	\begin{align*}
	u_t(x,y) = \int_{\R^d} u_{t/a}(x,z) u_{t/b}(z,y) dz & = \ex_x \left[e^{-\int_{0}^{t/a}V(X_s)ds} u_{t/b} (X_{t/a},y)\right] \\
& = \ex_x \left[e^{-\int_{0}^{t/a}V(X_s)ds} u_{t/b} (X_{t/a},y) ; \, t/a < \tau_{B_{{|x|/2}}(x)}\right] \\
& \ \ \ \ \ \ \ \ + \ex_x \left[e^{-\int_{0}^{t/a}V(X_s)ds} u_{t/b} (X_{t/a},y) ; \, t/a \geq \tau_{B_{|x|/2}(x)}\right] \\
& =: \nI_1 + \nI_2.
\end{align*}
Clearly,
\[
		\nI_1 \leq e^{-(t/a)V_*(x)} \ex_x \left[ g_{t/b}(X_{t/a},y);  \, t/a < \tau_{B_{{|x|/2}}(x)}\right] 
\]
and
\[
	\nI_2 \leq \ex_x \left[e^{-\int_0^{\tau_{B_{|x|/2}(x)}}V(X_s)ds}g_{t/b}(X_{t/a},y)\right] \leq \ex_x \left[e^{- V_*(x)\tau_{B_{|x|/2}(x)}}g_{t/b}(X_{t/a},y) \right],
\]
and, by Hölder's inequality, we get 
	\begin{equation}\label{eq:integrals1}
		\nI_1 \leq  e^{-(t/a)V_*(x)} \pr_x\big(t/a < \tau_{B_{{|x|/2}}(x)}\big)^{\frac{1}{b}} \ex_x \left[(g_{t/b}(X_{t/a},y))^{a}  \right]^{\frac{1}{a}},
	\end{equation} 
		\begin{equation}\label{eq:integrals2}
		\nI_2 \leq	\ex_x \left[e^{-bV_*(x)\tau_{B_{|x|/2}(x)}} \right]^{\frac{1}{b}}
			\ex_x \left[(g_{t/b}(X_{t/a},y))^{a}  \right]^{\frac{1}{a}}.
		\end{equation}
By space homogeneity and scaling of the Brownian motion, and the estimate \eqref{eq:C_0_def}, we obtain
\[
\pr_x\big(t/a < \tau_{B_{{|x|/2}}(x)}\big)^{\frac{1}{b}} = \pr_0\left(\frac{4t}{a|x|^2} < \tau_{B_1}\right)^{\frac{1}{b}} \leq C_0^{1/b} \exp\left(-\frac{4t}{ab|x|^2} \mu_0\right).
\]
Similarly, Lemma \ref{lem:Wendel} applied to $\lambda = bV_*(x)$ and $r = |x|/2$ gives the estimate
\[
\ex_x \left[e^{-bV_*(x)\tau_{B_{|x|/2}(x)}} \right]^{\frac{1}{b}} = \ex_0 \left[e^{-bV_*(x)\tau_{B_{|x|/2}(0)}} \right]^{\frac{1}{b}}
                \leq C^{1/b} e^{-\frac{\sqrt{V_*(x)} \, |x|}{4\sqrt{b}}}
\]
Moreover,
\begin{align*}
	\ex_x \left[(g_{t/b}(X_{t/a},y))^{a}  \right]^{\frac{1}{a}} & = \left(\int_{\R^d} g_{t/a}(x,z)g_{t/b}(z,y)\big(g_{t/b}(z,y)\big)^{a-1}  dz \right)^{1/a}\\            & \leq \left(\frac{b}{4 \pi t}\right)^{\frac{(a-1)d}{2a}} \left(\int_{\R^d} g_{t/a}(x,z) g_{t/b}(z,y)  dz \right)^{1/a} \\
	            & =  \left(\frac{b}{4 \pi t}\right)^{\frac{(a-1)d}{2a}} \left(g_{t}(x,y) \right)^{1/a}
							 = a^{\frac{d}{2}}b^{\frac{(a-1)d}{2a}} g_{at}(x,y). 
\end{align*}
With these estimates we can now come back to \eqref{eq:integrals1} and \eqref{eq:integrals2} and write
\[
\nI_1 \leq  C_0^{(a-1)/a} a^{\frac{d}{2}}\left(\frac{a}{a-1}\right)^{\frac{(a-1)d}{2a}}  \exp\left(-\frac{t}{a}V_*(x) - \frac{4(a-1)t}{a^2|x|^2} \mu_0\right)g_{at}(x,y),
\]
\[
\nI_2 \leq C^{(a-1)/a} a^{\frac{d}{2}} \left(\frac{a}{a-1}\right)^{\frac{(a-1)d}{2a}} \exp\left(-\frac{1}{4}\sqrt{\frac{a-1}{a}}\sqrt{V_*(x)} \, |x|\right) g_{at}(x,y).
\]
We conclude the proof by putting together the estimates for the expectations $\nI_1$ and $\nI_2$.
\end{proof}

	\begin{proof}[Proof of Theorem \ref{th:upper_final}]
	Let $x,y,z \in \R^d$, $t >0$ and $a>1$. First observe that by Lemma \ref{lem:upper} and symmetry of the kernel, we have
		\begin{equation*}
			u_{t/2}(x,z) \leq \sqrt{C_1}\exp\left(-\left(\left(C_2 V_*(x)+\frac{C_3}{|x|^2}\right)t \, \wedge \, C_4 \sqrt{V_*(x)} \, |x|\right)\right) g_{(a/2)t}(x,z),
		\end{equation*}
		and 
		\begin{equation*}
			u_{t/2}(z,y) \leq \sqrt{C_1}\exp\left(-\left(\left(C_2 V_*(y)+\frac{C_3}{|y|^2}\right)t \, \wedge \, C_4 \sqrt{V_*(y)} \, |y|\right)\right) g_{(a/2)t}(z,y).
		\end{equation*}
		The first estimate of the theorem follows directly from these bounds and the Chapman--Kolmogorov identity $u_t(x,y) = \int_{\R^d}u_{t/2} (x,z) u_{t/2} (z,y) dz$. 
		
		One more use of the Chapman--Kolmogorov property and the symmetry of the kernel, and the Cauchy--Schwarz inequality give us 
		\begin{align*}
			u_t(x,y) & = \int_{\R^d}u_{t/2} (x,z) u_{t/2} (y,z) dz \\ & \leq \left(\int_{\R^d} u_{t/2}^2(x,z) dz \right)^{\tfrac{1}{2}} \left(\int_{\R^d} u_{t/2}^2(y,z) dz \right)^{\tfrac{1}{2}}
		             = \Big(u_t(x,x) \Big)^{\tfrac{1}{2}} \Big(u_t(y,y) \Big)^{\tfrac{1}{2}}
		\end{align*}
		and 
		\begin{equation*}
			u_t(x,x) = \int_{\R^d} u_{t/2}^2(x,z) dz  = \norm{e^{-(t/4)H} u_{t/4}(x,\cdot)}_2^2 \leq \norm{e^{-(t/4)H}}_{2,2}^2 \, \norm{u_{t/4}(x, \cdot)}_2^2 = e^{-\frac{\lambda_0}{2}t} \, u_{t/2} (x,x).
		\end{equation*}
		Consequently,
			\begin{align} \label{eq:aux_up_1}
			u_t(x,y) \leq \Big(u_{t/2}(x,x) \Big)^{\tfrac{1}{2}} \Big(u_{t/2}(y,y) \Big)^{\tfrac{1}{2}} e^{-\frac{\lambda_0}{2}t}.
		\end{align}
		Now, from Lemma \ref{lem:upper} we have
		\begin{equation*}
			u_{t/2}(x,x) \leq \sqrt{C_1}\exp\left(-\left(\left(C_2 V_*(x)+\frac{C_3}{|x|^2}\right)t \, \wedge \, C_4 \sqrt{V_*(x)} \, |x|\right)\right) g_{(a/2)t}(x,x).
		\end{equation*}
		By applying this estimate to the both diagonal terms on the right hand side of \eqref{eq:aux_up_1}, we obtain the second claimed bound of the theorem.
	\end{proof}

\begin{proof}[Proof of Theorem \ref{th:main-1}] 
 By setting $a=2$ in Theorem \ref{th:upper_final}, we have $C_2= \frac{1}{4}$, $C_3 = \frac{\mu_0}{2}$, $C_4 = \frac{\sqrt{2}}{8}$ and, consequently, 

\[
h(t,x) \leq \exp\left(-\frac{\sqrt{2}}{16} \left(\left(V_*(x)+\frac{\mu_0}{4|x|^2}\right)t \, \wedge \, 2 |x| \sqrt{V_*(x)} \, \right)\right)
\]
and
\[
\sqrt{h(t,x)} \leq \exp\left(- \frac{\sqrt{2}}{32}  \left(\left(V_*(x)+\frac{\mu_0}{4|x|^2}\right)t \, \wedge \, 2 |x| \sqrt{V_*(x)} \, \right)\right).
\]
Moreover, recall that
\[
		H(t,x)= \exp\left(- \frac{\sqrt{2}}{32}  \left(\left(V_*(x)+\frac{\mu_0}{4|x|^2}\right)t \, \wedge \, 2 |x| \sqrt{V_*(x)+\frac{\mu_0}{4|x|^2}} \, \right)\right),  \quad x \neq 0,
\]
$ H(t,0) = \exp\big(-\frac{\sqrt{2 \mu_0}}{32}\big)$.
We will prove that
\begin{align} \label{eq:h_by_H}
 \sqrt{h(t,x)}  \leq \exp\left( \frac{\sqrt{\mu_0}}{16}\right) H(t,x).
\end{align}
Once this is done, the assertion follows from Theorem \ref{th:upper_final} with $a=2$ and the obvious inequality $h(t,x) \leq \sqrt{h(t,x)} $. Since  $\sqrt{h(t,0)}=1 \leq \exp{\left( \frac{(2-\sqrt{2})\sqrt{\mu_0}}{32} \right)} = \exp\left( \frac{\sqrt{\mu_0}}{16}\right) H(t,0)$, \eqref{eq:h_by_H} holds for $x=0$. We assume that $x \neq 0$ and consider two cases.

If $V_*(x) \geq \mu_0/(4|x|^2)$, then by the Taylor approximation,
\[
0 < |x|\left(\sqrt{V_*(x)+\frac{\mu_0}{4|x|^2}} - \sqrt{V_*(x)}\right) 
       \leq \frac{|x|}{2 \sqrt{V_*(x)}} \frac{\mu_0}{4|x|^2}
			 \leq \frac{|x|}{2 \sqrt{\frac{\mu_0}{4|x|^2}}} \frac{\mu_0}{4|x|^2} = \frac{\sqrt{\mu_0}}{4}.
\]
This gives that
\[
\exp\left(- \frac{\sqrt{2}}{16}  |x| \sqrt{V_*(x)} \, \right) \leq \exp\left( \frac{\sqrt{\mu_0}}{32}\right)\exp\left(- \frac{\sqrt{2}}{16}  |x| \sqrt{V_*(x)+\frac{\mu_0}{4|x|^2}} \, \right).
\]
On the other hand, if $V_*(x) \leq \mu_0/(4|x|^2)$, then
\[
\exp\left( \frac{\sqrt{2}}{16}  |x| \sqrt{V_*(x)+\frac{\mu_0}{4|x|^2}} \, \right) \leq \exp\left( \frac{\sqrt{2}}{16}  |x| \sqrt{\frac{\mu_0}{2|x|^2}} \, \right) = \exp\left( \frac{\sqrt{\mu_0}}{16}\right),
\]
and, consequently, 
\begin{align*}
\exp \left(- \frac{\sqrt{2}}{16}  |x| \sqrt{V_*(x)} \, \right) \leq 1 
 \leq \exp\left( \frac{\sqrt{\mu_0}}{16}\right) \exp\left(- \frac{\sqrt{2}}{16} |x| \sqrt{V_*(x)+\frac{\mu_0}{4|x|^2}} \, \right).
\end{align*}
These estimates show that \eqref{eq:h_by_H} holds true.
\end{proof}
	
	\section{The proof of the lower Bound} \label{sec:lower}

The proof of Theorem \ref{th:lower_main} consists of several auxiliary results. The only technical tool we use in our reasoning is the lower estimate with sharp Gaussian term of the density of the Brownian motion killed upon exiting a ball. This bound has been obtained just recently for small times by Małecki and Serafin \cite{malecki-serafin} (see also the classical result for more general domains by Zhang \cite{Zhang} and the newest paper for convex domains by Serafin \cite{Serafin}). By combining it with the classical intrinsic ultracontractivity estimate (see Davies and Simon \cite{Davies-book, Davies-Simon}) and by using the scaling property, one gets the estimate in the following form which is useful for our purposes below:\ 
there exists a constant $\widetilde C \in (0,1]$ such that
\begin{align} \label{eq:lower_dir_balls}
				g^{B_r}_t(x,y) \geq \widetilde C \frac{1 \wedge \frac{(r-|x|) (r-|y|)}{t}}{(1 \wedge \frac{r^2}{t})^{(d+2)/2}} \exp\left(- \mu_0 \frac{t}{r^2}\right) g_{t}(x,y), \quad r>0, \ x,y \in B_r,\,  t>0,
\end{align}	
see \cite[Corollary 1]{malecki-serafin}. 

\begin{lemma}\label{lem:aux-1}
	Let $0 \leq V \in L^{\infty}_{\loc}(\R^d)$. For every $x, y \in \R^d$, $\rho>0$ such that $|x|, |y| \leq \rho$ and $t>0$ we have
		\begin{align}\label{eq:low-1}
			u_t(x,y) \geq (\widetilde C/4) \exp\left(-\left(V^{*}(\rho)+\frac{\mu_0}{4\rho^2}\right)t\right) g_t(x,y).
		\end{align}
\end{lemma}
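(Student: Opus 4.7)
The plan is to compare the full Schrödinger semigroup with the semigroup killed on leaving the ball $B_{2\rho}$, and then apply the sharp Gaussian lower bound \eqref{eq:lower_dir_balls} for the killed Brownian motion transition density. The point of choosing radius $2\rho$ is twofold:\ on $\overline{B}_{2\rho}$ the potential is controlled from above by $V^{*}(\rho)$ (by definition of the radial upper profile), and for $|x|, |y| \leq \rho$ the points $x,y$ are comfortably inside $B_{2\rho}$ with $2\rho-|x|, 2\rho-|y| \geq \rho$, which will give a good lower estimate on the boundary prefactor in \eqref{eq:lower_dir_balls}.

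First I would use \eqref{eq:lim_kernel} and retain only the part of the expectation on the event $\{s < \tau_{B_{2\rho}}\}$. On this event the path $(X_u)_{u \in [0,s]}$ stays in $\overline{B}_{2\rho}$, so $\int_0^s V(X_u)\, du \leq V^{*}(\rho)\, s$, yielding
\[
u_t(x,y) \geq \lim_{s \nearrow t} e^{-V^{*}(\rho)\, s}\, \ex_x\bigl[g_{t-s}(X_s, y);\, s < \tau_{B_{2\rho}}\bigr] = e^{-V^{*}(\rho)\, t}\, g^{B_{2\rho}}_t(x,y),
\]
where the limit on the right is the standard identification of $g^{B_{2\rho}}_t(x,y)$ via the strong Markov property (or by continuity of $g^{B_{2\rho}}_t$).

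Next I would apply \eqref{eq:lower_dir_balls} with $r = 2\rho$. The exponential factor there is exactly $\exp(-\mu_0 t/(4\rho^2))$, which matches the target, so it only remains to bound from below the prefactor
\[
\frac{1 \wedge \frac{(2\rho - |x|)(2\rho-|y|)}{t}}{\bigl(1 \wedge \frac{4\rho^2}{t}\bigr)^{(d+2)/2}}.
\]
Since $|x|,|y| \leq \rho$, the numerator is at least $1 \wedge (\rho^2/t)$. A short case analysis on the size of $t$ relative to $\rho^2$ and $4\rho^2$ should show this ratio is bounded below by $1/4$ uniformly:\ when $t \leq \rho^2$ both min's equal $1$; when $\rho^2 < t \leq 4\rho^2$ the ratio equals $\rho^2/t \geq 1/4$; and when $t > 4\rho^2$ the ratio simplifies to $4^{-(d+2)/2}(t/\rho^2)^{d/2}$, which is at least $2^d/4^{(d+2)/2} = 1/4$.

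Combining these two estimates gives
\[
u_t(x,y) \geq e^{-V^{*}(\rho)\, t}\, g^{B_{2\rho}}_t(x,y) \geq \frac{\widetilde C}{4}\, \exp\!\left(-\Bigl(V^{*}(\rho) + \frac{\mu_0}{4\rho^2}\Bigr) t\right) g_t(x,y),
\]
which is the claim. The step I expect to take the most care is the case distinction showing the boundary prefactor is at least $1/4$ — it is routine but needs the three subcases above to cover all $t > 0$ uniformly. The rest is essentially bookkeeping around the Feynman--Kac formula and a direct invocation of \eqref{eq:lower_dir_balls}.
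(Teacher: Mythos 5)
Your proposal is correct and follows essentially the same route as the paper's proof: restrict the Feynman--Kac expectation to the event $\{s < \tau_{B_{2\rho}}\}$, pull out $e^{-V^*(\rho)t}$, identify the remaining limit with $g^{B_{2\rho}}_t(x,y)$, and then apply the Ma\l{}ecki--Serafin lower bound \eqref{eq:lower_dir_balls} with $r=2\rho$. Your explicit three-case verification that the boundary prefactor is at least $1/4$ is simply the detail the paper leaves to the reader in \eqref{eq:lem-aux-1-1}.
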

	
\begin{proof}
We first observe that by \eqref{eq:lim_kernel}, for every $r>0$ and $x, y \in B_r$, we have
\begin{align*}
		u_t(x,y) &\geq   \lim\limits_{s \nearrow t} \ex_x \left[ e^{-\int_0^s V(X_u)du} g_{t-s}(X_s,y)\ ; \ s < \tau_{B_r} \right] \\
	  & \geq e^{-t \, \sup_{z \in B_r} V(z)}  \lim\limits_{s \nearrow t} \ex_x \left[ g^{B_r}_{t-s}(X_s,y)\ ; \ s < \tau_{B_r} \right] \\
		& = e^{-t \, \sup_{z \in B_r} V(z)}  g^{B_r}_t(x,y).
\end{align*} 
By taking $r=2\rho$, we get from \eqref{eq:lower_dir_balls} that
\begin{align}\label{eq:lem-aux-1-1}
g^{B_r}_t(x,y) \geq \widetilde C \frac{1 \wedge \frac{\rho^2}{t}}{(1 \wedge \frac{4\rho^2}{t})^{(d+2)/2}} \exp\left(- \mu_0 \frac{t}{4\rho^2}\right) g_t(x,y) \geq (\widetilde C/4) \exp\left(- \mu_0 \frac{t}{4\rho^2}\right) g_{t}(x,y)
\end{align}
and, consequently,
\[
u_t(x,y) \geq (\widetilde C/4) \exp\left(-\left(V^{*}(\rho)+\frac{\mu_0}{4\rho^2}\right)t\right) g_t(x,y).
\]
\end{proof}	
	
Recall the notation:
\[
t_{\rho} = \frac{\rho}{2\sqrt{V^{*}(\rho)+\frac{\mu_0}{4\rho^2}}}, \quad \rho \geq 1, \qquad  \text{and} \qquad \rho_x = |x| \vee 1.
\]	
	
\begin{lemma}\label{lem:aux-2} 	Let $0 \leq V \in L^{\infty}_{\loc}(\R^d)$.
Then for every $x, y \in \R^d$, $t>0$ such that $\rho_y  \leq \rho_x$ and $t \geq 2t_{\rho_x}$
we have
	 \begin{align*}
			u_t(x,y)  \geq \frac{(\widetilde C/4)^2}{4^{d/2}\Gamma(d/2+1)(4\pi t)^{d/2}} & \exp\left(-\frac{dt}{2}\right) \exp\left(-\left(V^{*}(\rho_y)+\frac{\mu_0}{4\rho_y^2}\right)t\right) \\ & \times \exp\left(- \frac{9}{2} \rho_x \sqrt{V^{*}(\rho_x)+\frac{\mu_0}{4\rho_x^2}}\right).
		\end{align*}
	\end{lemma}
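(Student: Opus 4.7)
My approach is to split the trajectory via Chapman--Kolmogorov at the \emph{ballistic} intermediate time $t_1 := t_{\rho_x}$ and integrate the intermediate point $w$ over the unit ball $B_1$:
\[
u_t(x,y) \geq \int_{B_1} u_{t_1}(x,w)\, u_{t-t_1}(w,y)\, dw.
\]
The hypothesis $t \geq 2 t_{\rho_x}$ ensures $t - t_1 \geq t_{\rho_x} > 0$, so the splitting makes sense. Since $\rho_x, \rho_y \geq 1$, every $w \in B_1$ satisfies $|w| \leq 1 \leq \rho_x \wedge \rho_y$, so Lemma~\ref{lem:aux-1} with $\rho = \rho_x$ is available for the first leg and with $\rho = \rho_y$ for the second. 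For $w \in B_1$ I use the crude bounds $|x-w| \leq \rho_x + 1 \leq 2\rho_x$ and $|w-y| \leq 1 + \rho_y \leq 2\rho_y$ to lower-bound both Gaussians pointwise on $B_1$; integrating over $B_1$ contributes the volume $\mathrm{vol}(B_1) = \pi^{d/2}/\Gamma(d/2+1)$ which, combined with one Gaussian normalisation, produces the factor $1/(4t_1)^{d/2}\Gamma(d/2+1)$, while the other Gaussian normalisation $1/(4\pi(t-t_1))^{d/2}$ is replaced by $1/(4\pi t)^{d/2}$ using $t - t_1 \leq t$.

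\textbf{The core identity.} The choice $t_1 = t_{\rho_x}$ is made precisely to collapse the total cost of the first leg (Feynman--Kac $+$ Gaussian):
\[
\Bigl(V^*(\rho_x) + \tfrac{\mu_0}{4\rho_x^2}\Bigr) t_1 + \tfrac{\rho_x^2}{t_1} \;=\; \tfrac{5}{2}\, \rho_x \sqrt{V^*(\rho_x) + \tfrac{\mu_0}{4\rho_x^2}}.
\]
The leftover penalty $\rho_y^2/(t-t_1)$ from the second Gaussian is absorbed into the same quantity by combining $t - t_1 \geq t_{\rho_x}$ with $\rho_y \leq \rho_x$: the chain $\rho_y^2/(t-t_1) \leq \rho_y^2/t_{\rho_x} = 2(\rho_y^2/\rho_x)\sqrt{V^*(\rho_x)+\mu_0/(4\rho_x^2)} \leq 2\rho_x\sqrt{V^*(\rho_x)+\mu_0/(4\rho_x^2)}$ adds $2$ to the $5/2$ and yields exactly the target exponent $\tfrac{9}{2}\rho_x\sqrt{V^*(\rho_x)+\mu_0/(4\rho_x^2)}$. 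The $V^*(\rho_y)$-piece is then handled trivially: since $t - t_1 \leq t$, replacing $(t-t_1)$ by $t$ in the second Feynman--Kac exponent only strengthens the lower bound.

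\textbf{Main obstacle: the $e^{-dt/2}$ factor.} The Gaussian normalisation from the first leg leaves a prefactor $1/(4t_{\rho_x})^{d/2}$, which cannot be compared to the target $1/4^{d/2}$ in an absolute way since $t_{\rho_x}$ is unbounded in the data. This is the source of the (at first surprising) $e^{-dt/2}$ in the statement. The resolution is the elementary inequality $\log u \leq u$ (for $u > 0$) together with the hypothesis $t_{\rho_x} \leq t/2$: one obtains $\log t_{\rho_x} \leq t_{\rho_x} \leq t/2 \leq t$, equivalently $1/t_{\rho_x}^{d/2} \geq e^{-dt/2}$, and hence $1/(4t_{\rho_x})^{d/2} \geq e^{-dt/2}/4^{d/2}$. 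This is the only step where the assumption $t \geq 2 t_{\rho_x}$ is used in a genuinely non-obvious way, and it is the mechanism producing the $e^{-dt/2}$ factor in the target bound.
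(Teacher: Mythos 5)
Your proof is correct, and both the key ingredients and the final bookkeeping coincide with the paper's argument: the splitting time $t_1=t_{\rho_x}$, the intermediate set $B_1$, the application of Lemma~\ref{lem:aux-1} to the second leg, the combination of exponents giving the coefficient $\tfrac12+2+2=\tfrac92$, and the elementary inequality $t_1^{-d/2}\geq e^{-dt/2}$ that produces the $e^{-dt/2}$ factor. The one organizational difference is how the first leg is handled: the paper conditions on the event $\{t_{\rho_x}<\tau_{B_{2\rho_x}},\, X_{t_{\rho_x}}\in B_{\rho_y}\}$, bounds the Feynman--Kac exponential by $e^{-t_{\rho_x}V^*(\rho_x)}$ on that event, and then estimates the resulting probability via the killed kernel $g^{B_{2\rho_x}}_{t_{\rho_x}}$ integrated over $B_1$; you instead apply Lemma~\ref{lem:aux-1} with $\rho=\rho_x$ directly to $u_{t_1}(x,w)$ for $w\in B_1$. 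Since Lemma~\ref{lem:aux-1} is itself proved by exactly that conditioning plus the Małecki--Serafin bound, the two derivations produce the identical prefactor and exponent; your symmetric use of Lemma~\ref{lem:aux-1} for both legs is arguably slightly more transparent, while the paper's version makes the role of the exit time from $B_{2\rho_x}$ explicit in this lemma as well. No gap.
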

	
	\begin{proof}
		Since $t > t_{\rho_x}$, we can write
		\begin{align}\label{eq:eq1}
			u_t(x,y) & = \ex_x \left[e^{-\int_{0}^{t_{\rho_x}}V(X_s)ds} u_{t-t_{\rho_x}} (X_{t_{\rho_x}},y)\right] \nonumber \\
			         & \geq \ex_x \left[e^{-\int_{0}^{t_{\rho_x}}V(X_s)ds} u_{t-t_{\rho_x}} (X_{t_{\rho_x}},y) : t_{\rho_x} < \tau_{B_{2\rho_x}}\right] \nonumber \\
							 & \geq e^{-t_{\rho_x} V^*(\rho_x)} \ex_x \left[u_{t-t_{\rho_x}} (X_{t_{\rho_x}},y) : t_{\rho_x} < \tau_{B_{2\rho_x}}, \,X_{t_{\rho_x}} \in B_{\rho_y} \right].
		\end{align}
By Lemma \ref{lem:aux-1}, on the set $\big\{X_{t_{\rho_x}} \in B_{\rho_y}\big\}$, we have
		\begin{align*}
			u_{t-t_{\rho_x}}(X_{t_{\rho_x}},y) 
			  & \geq (\widetilde C/4) \exp\left(-\left(V^{*}(\rho_y)+\frac{\mu_0}{4\rho_y^2}\right)t\right) g_{t-t_{\rho_x}}(X_{t_{\rho_x}},y) \\
				& = \frac{\widetilde C/4}{(4\pi(t-t_{\rho_x}))^{d/2}} \exp\left(-\left(V^{*}(\rho_y)+\frac{\mu_0}{4\rho_y^2}\right)t\right) 
				    \exp\left(-\tfrac{|y-X_{t_{\rho_x}}|^2}{4(t-t_{\rho_x})}\right)
			\end{align*}				
and, because $t_{\rho_x} \leq t-t_{\rho_x} \leq t$ and $\rho_y \leq \rho_x$, this estimate can be continued as follows
	\begin{align*}
				& \geq \frac{\widetilde C/4}{(4\pi t)^{d/2}} \exp\left(-\left(V^{*}(\rho_y)+\frac{\mu_0}{4\rho_y^2}\right)t\right) 
				    \exp\left(-\tfrac{4\rho_y^2}{4 t_{\rho_x}}\right) \\
			  & \geq \frac{\widetilde C/4}{(4\pi t)^{d/2}} \exp\left(-\left(V^{*}(\rho_y)+\frac{\mu_0}{4\rho_y^2}\right)t\right) 
				    \exp\left(-\tfrac{\rho_x^2}{t_{\rho_x}}\right) \\
				&	= \frac{\widetilde C/4}{(4\pi t)^{d/2}} \exp\left(-\left(V^{*}(\rho_y)+\frac{\mu_0}{4\rho_y^2}\right)t\right) 
				    \exp\left(- 2 \rho_x \sqrt{V^{*}(\rho_x)+\frac{\mu_0}{4\rho_x^2}}\right).
		\end{align*}
With this bound we can come back to \eqref{eq:eq1} and write
	\begin{align*}
			u_t(x,y)  \geq \frac{\widetilde C/4}{(4\pi t)^{d/2}} & \exp\left(-\left(V^{*}(\rho_y)+\frac{\mu_0}{4\rho_y^2}\right)t\right)  \exp\left(- 2 \rho_x \sqrt{V^{*}(\rho_x)+\frac{\mu_0}{4\rho_x^2}}\right) \\
	      	& \times \exp\left(-\frac{\rho_x V^*(\rho_x)}{2\sqrt{V^{*}(\rho_x)+\frac{\mu_0}{4\rho_x^2}}}\right) \pr_x \left[ t_{\rho_x} < \tau_{B_{2\rho_x}}, \,X_{t_{\rho_x}} \in B_{\rho_y} \right].
		\end{align*}
We only need to estimate the last probability. By using \eqref{eq:lem-aux-1-1} and the inequality $t_{\rho_x} < t$, we get
\begin{align*} 
\pr_x \left[ t_{\rho_x} < \tau_{B_{2\rho_x}},\right.&\left.X_{t_{\rho_x}} \in B_{\rho_y} \right] \geq \int_{B_1} g^{B_{2\rho_x}}_{t_{\rho_x}}(x,z) dz \\
 & \geq \frac{\widetilde C |B_1|}{4(4\pi t_{\rho_x})^{d/2}} \exp\left(- \mu_0 \frac{t_{\rho_x}}{4\rho_x^2}\right) \exp\left(-\tfrac{4 \rho_x^2}{4t_{\rho_x}}\right) \\
 & \geq \frac{\widetilde C \pi^{d/2}}{4 \Gamma(d/2+1)(4\pi t)^{d/2}} \exp\left(- \mu_0 \frac{t_{\rho_x}}{4\rho_x^2}\right) \exp\left(-\tfrac{ \rho_x^2}{t_{\rho_x}}\right) \\
 & \geq \frac{\widetilde C}{4^{1+d/2} \Gamma(d/2+1) } \exp\left(-\frac{dt}{2} \right)\exp\left(- \mu_0\frac{t_{\rho_x}}{4\rho_x^2}\right) \exp\left(- 2 \rho_x \sqrt{V^{*}(\rho_x)+\frac{\mu_0}{4\rho_x^2}}\right),
\end{align*}
where in the last line we used the inequality $t^{-d/2} \geq e^{-(d/2)t}$.
This leads us to the conclusion:
 \begin{align*}
			u_t(x,y)  \geq \frac{\widetilde C^2}{4^{2+d/2} \Gamma(d/2+1) (4\pi t)^{d/2}} & \exp\left(-\frac{dt}{2}\right) \exp\left(-\left(V^{*}(\rho_y)+\frac{\mu_0}{4\rho_y^2}\right)t\right) \\ & \times \exp\left(- \frac{9}{2} \rho_x \sqrt{V^{*}(\rho_x)+\frac{\mu_0}{4\rho_x^2}}\right).
		\end{align*}
In the last line we used the equality
\begin{equation*}
	\exp\left(-\frac{\rho_x V^*(\rho_x)}{2\sqrt{V^{*}(\rho_x)+\frac{\mu_0}{4\rho_x^2}}}\right)  \exp\left(- \mu_0\frac{t_{\rho_x}}{4\rho_x^2}\right) =  \exp\left(- \frac{1}{2} \rho_x \sqrt{V^{*}(\rho_x)+\frac{\mu_0}{4\rho_x^2}}\right).
\end{equation*}
	\end{proof}	
	
We are now ready to give the proof of the main result of this section.
	
\begin{proof}[Proof of Theorem \ref{th:lower_main}]
Let $x ,y \in \R^d$, $t>0$.  Part (2) follows directly from Lemma \ref{lem:aux-1} with $\rho=\rho_x \vee \rho_y$ and the inequality $\widetilde C /4 \geq c_2$.  We only need to establish Part (1). 

Assume that $4t_{\rho_x \vee \rho_y} \leq t$ and suppose that $\rho_x \geq \rho_y$. In particular, $t_{\rho_x \vee \rho_y} = t_{\rho_x}$.  Observe that
\[
4t_{\rho_z} \leq t \quad \Longleftrightarrow \quad  2 \rho_z \sqrt{V^{*}(\rho_z)+\frac{\mu_0}{4\rho_z^2}} \leq \left(V^{*}(\rho_z)+\frac{\mu_0}{4\rho_z^2}\right) t.
\]
In particular,
		\begin{equation*}
		4t_{\rho_z} \leq t \implies K(t,\rho_z) = \exp\left(-\frac{9}{2} \rho_z \sqrt{V^{*}(\rho_z)+\frac{\mu_0}{4\rho_z^2}}  \right)
		\end{equation*}
		and
		\begin{equation*}
		4t_{\rho_z} \geq t \implies K(t,\rho_z) = \exp\left(-\frac{9}{4} \left(V^{*}(\rho_z)+\frac{\mu_0}{4\rho_z^2}\right) t \right).
		\end{equation*}
In order to complete the proof, we need to consider two cases and just use the above observations.

Let first  $4t_{\rho_y} \leq t$.  Since $2t_{\rho_x} \leq t/2$ and $2t_{\rho_y} \leq t/2$, we can use Lemma \ref{lem:aux-2} and symmetry to get
\[
u_{t/2}(x,z) \geq \frac{(\widetilde C/4)^2}{4^{d/2}\Gamma(d/2+1)} \frac{1}{(2\pi t)^{d/2}} \exp\left(-\left(\frac{d}{2}+V^{*}(1)+\frac{\mu_0}{4}\right)\frac{t}{2}\right)  \exp\left(-\frac{9}{2} \rho_x \sqrt{V^{*}(\rho_x)+\frac{\mu_0}{4\rho_x^2}}  \right)
\]
and
\[
u_{t/2}(z,y) \geq \frac{(\widetilde C/4)^2}{4^{d/2}\Gamma(d/2+1)} \frac{1}{(2\pi t)^{d/2}} \exp\left(-\left( \frac{d}{2}+V^{*}(1)+\frac{\mu_0}{4}\right)\frac{t}{2}\right)  \exp\left(-\frac{9}{2} \rho_y \sqrt{V^{*}(\rho_y)+\frac{\mu_0}{4\rho_y^2}}  \right),
\]
whenever $|z| \leq 1$. By the Chapman--Kolmogorov property and the inequality $t^{-d/2} \geq e^{-(d/2)t}$, we then obtain the estimate
\begin{align*}
u_t(x,y) & \geq  \int_{B_1}u_{t/2}(x,z)u_{t/2}(z,y)dz \\ &  \geq \left(\frac{(\widetilde C/4)^2}{4^{d/2}\Gamma(d/2+1)}\right)^2 \frac{|B_1|}{\pi^{d/2}(4\pi t)^{d/2}} e^{-\lambda_2  t} K(t,\rho_x) K(t,\rho_y)   \\
                 &  = c_2 e^{-\lambda_2  t} g_t(0,0) K(t,\rho_x) K(t,\rho_y). 
\end{align*}
This is exactly what we wanted to prove. 

 Suppose now that $4 t_{\rho_y} \geq t$. Recall that we have $2t_{\rho_x} \leq t$ and $\rho_x \geq \rho_y$. Then, by Lemma	\ref{lem:aux-2}, we get 
 \begin{align*}
			u_t(x,y)  & \geq \frac{\sqrt{c_2}}{(4\pi t)^{d/2}}  \exp\left(-\frac{dt}{2}\right) \exp\left(-\frac{9}{4} \left(V^{*}(\rho_y)+\frac{\mu_0}{4\rho_y^2}\right)t\right) \exp\left(-\frac{9}{2}  \rho_x \sqrt{V^{*}(\rho_x)+\frac{\mu_0}{4\rho_x^2}}\right) \\
			          &  \geq c_2 \exp\left(-\lambda_2 t\right) g_t(0,0) K(t,\rho_y)  K(t,\rho_x).
		\end{align*}
 This completes the proof.  
\end{proof}

\section{Applications, previous results, discussion and examples} \label{sec:applications}

\subsection{Qualitatively sharp two-sided bounds for confining potentials} \label{subsec:conf} Recall that the Schr\"odinger operator with confining potential has compact resolvent and purely discrete spectrum, $0<\lambda_0 = \inf \sigma(H)$ is a simple eigenvalue, and the corresponding eigenfunction $\varphi_0 \in L^2(\R^d,dx)$ is continuous, bounded and strictly positive on $\R^d$. We refer to $\lambda_0$ and $\varphi_0$ as to the \emph{ground state} eigenvalue and eigenfunction. 

The framework of confining potentials provides important examples of intrinsically ultracontractive Schr\"odinger semigroups, see Davies and Simon \cite{Davies-Simon}, Davies \cite{Davies-book} and Ba\~nuelos \cite{Banuelos}. One of equivalent definitions says that the semigroup $\big\{e^{-tH}: t \geq 0\big\}$ is \emph{intrinsically ultracontractive} (IUC in short), if for every $t>0$ there exists $c = c(t)$ such that for every $x, y \in \R^d$ we have $u_t(x,y) \leq c \, \varphi_0(x) \, \varphi_0(y)$. This implies the following two-sided estimates:\ for every $t_0 >0$ there exists $\widetilde c=\widetilde c(t_0) \geq 1$ such that
\[
\frac{1}{\widetilde c} \, e^{-\lambda_0 t} \, \varphi_0(x) \, \varphi_0(y) \leq u_t(x,y) \leq \widetilde c \, e^{-\lambda_0 t} \, \varphi_0(x) \, \varphi_0(y), \qquad x, y \in \R^d, \ \ t\geq t_0.
\]
For many examples of potentials it is known that $\varphi_0$ is comparable to $\exp\big(-c |x| \sqrt{V(x)}\big)$, typically with different constant $c$ from above and from below, see Simon \cite{Simon-1} and Carmona \cite{Carmona}, and references in these papers. Clearly, this leads to qualitatively sharp two-sided estimates for large times. On the other hand, the estimates for non-IUC semigroups were an open problem for a long time. 
For example, if
\begin{align} \label{eg:conf_ex}
V(x) = |x|^{\alpha}, \qquad \alpha>0,
\end{align}
then IUC holds if and only if $\alpha >2$. 
More recently, this example has been studied by Sikora \cite{Sikora} who proved the upper bounds for the full range of $\alpha>0$ and $t>0$, and the two-sided diagonal-on estimate. Short time upper estimates of a similar type for the potential \eqref{eg:conf_ex} and more general profiles were also obtained by Metafune and Spina \cite{Metafune-Spina}, and Spina \cite{Spina}. Metafune, Pallara and Rhandi \cite{Metafune-Pallara-Rhandi} analyzed the constant in the intrinsic ultracontractivity estimate for small $t$. We also refer the reader to the paper by Ouhabaz and Rhandi \cite{Ouhabaz-Rhandi} for upper estimates for uniformly elliptic operators. 

Our Theorems \ref{th:main-1} and \ref{th:lower_main} immediately give the following, global in time and space, two-sided estimates for general nonnegative locally bounded confining potentials, in both IUC and non-IUC settings. Recall that the functions $H$ and $K$, and the constants $\gamma_1, \gamma_2, c_1, c_2$ come directly from the statements of these theorems.

\begin{corollary} \label{cor:dec} 
For every confining potential $0 \leq V \in L^{\infty}_{\loc}(\R^d)$, $x,y \in \R^d$ and $t >0$ we have the following estimates.
\begin{itemize}
\item[(1)] If $4t_{\rho_x \vee \rho_y} \leq t$, then
\begin{align*}
c_2  e^{-\gamma_2 t}\, K(t,\rho_x) \, K(t,\rho_y) \, g_t(0,0) \leq u_t(x,y)
	    \leq c_1\, e^{-\gamma_1 t}  \, H(t,x) \,  H(t,y)\, g_{t}(0,0).
\end{align*}
\item[(2)] If $4t_{\rho_x \vee \rho_y} \geq t$, then
\begin{align*}
c_2 \, K(t,\rho_x) \, K(t,\rho_y) \, g_t(x,y) \leq u_t(x,y)  
	     \leq c_1 \, H(t,x) \,  H(t,y)\, g_{2t}(x,y).
\end{align*}
\end{itemize}
\end{corollary}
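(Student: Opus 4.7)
The plan is to read the corollary off directly from Theorems \ref{th:main-1} and \ref{th:lower_main}; no new analysis is required, only matching the two estimates in each of the two time/space regimes.

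For the lower bounds I would simply invoke Theorem \ref{th:lower_main}. In the regime $4t_{\rho_x \vee \rho_y} \leq t$ part (1) of that theorem gives exactly the inequality claimed in case (1) of the corollary. In the regime $4t_{\rho_x \vee \rho_y} \geq t$ part (2) of the theorem gives the slightly stronger bound $u_t(x,y) \geq c_2 K(t,\rho_x \vee \rho_y) g_t(x,y)$, and I would reduce it to the symmetric form stated in the corollary by the trivial observation that $K(t,\rho) \leq 1$ for every $\rho \geq 1$ (its defining exponent is non-positive), so that
\[
K(t,\rho_x)\, K(t,\rho_y) \;=\; K(t,\rho_x \vee \rho_y)\, K(t,\rho_x \wedge \rho_y) \;\leq\; K(t,\rho_x \vee \rho_y).
\]

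For the upper bounds I would apply Theorem \ref{th:main-1}, which in both regimes yields
\[
u_t(x,y) \leq c_1\, H(t,x)\, H(t,y)\,\bigl(g_{2t}(x,y) \wedge e^{-\gamma_1 t} g_t(0,0)\bigr).
\]
In case (1) I would keep the $e^{-\gamma_1 t} g_t(0,0)$ branch of the minimum, and in case (2) I would keep the $g_{2t}(x,y)$ branch; this is precisely what is asserted. Since $a \wedge b$ is bounded by either argument, each choice is valid in either regime, so the case split in the corollary is forced by the lower bound rather than the upper one. The confining hypothesis enters only through the fact that $\lambda_0 > 0$, so that both $\gamma_1$ and $\gamma_2$ are strictly positive and the exponential time decay in case (1) is genuine; this is what makes the two-sided estimate qualitatively sharp, as emphasized in the statements of the examples referenced after the corollary.

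I do not anticipate any real obstacle. The only conceptual point worth stressing is the structural compatibility of Theorems \ref{th:main-1} and \ref{th:lower_main}: both estimates factorize in the spatial variables through $H$ or $K$, both carry the same Gaussian interaction term (with the optimal $g_t$ below and the near-optimal $g_{2t}$ above), and the on-diagonal/off-diagonal dichotomy is governed in both by the threshold $4t_{\rho_x \vee \rho_y}$ vs.\ $t$. Once this alignment is recognised the proof reduces to selecting the correct branch of the minimum in Theorem \ref{th:main-1} and applying the trivial bound $K \leq 1$ in case (2) of Theorem \ref{th:lower_main}.
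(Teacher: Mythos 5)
Your derivation is correct and is exactly the reading-off the paper intends when it writes that Theorems \ref{th:main-1} and \ref{th:lower_main} ``immediately give'' Corollary \ref{cor:dec}; the paper supplies no separate proof. The two small ingredients you identify -- selecting the appropriate branch of the minimum in the upper bound according to the regime, and using $K(t,\rho)\leq 1$ to pass from $K(t,\rho_x\vee\rho_y)$ to $K(t,\rho_x)K(t,\rho_y)$ in case (2) of the lower bound -- are precisely what is needed, and your remark that the case split is dictated by the lower bound (the upper bound holding with either branch in either regime) is an accurate and useful observation.
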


These estimates take the sharpest form for potentials comparable to radial monotone functions that grow at infinity sufficiently regularly.
They are fully uniform if the growth of the potential is not too fast. We give an illustration with the following two examples. Example \ref{ex:conf_gen} is general, and Example \ref{ex:conf} shows the applications to some specific classes of potentials.

\begin{example} [\textbf{Potentials with radial monotone profiles}] \label{ex:conf_gen}
Let $0 \leq V \in L^{\infty}_{\loc}(\R^d)$ be such that there exists a constant $m \geq 1$ satisfying
\begin{align} \label{eq:doubling}
V^*(|x|) \leq m V_*(x) , \qquad |x| \geq 1.
\end{align}
Define $W(r):=V^*(r)$. Then $W$ is referred to as the radial monotone profile of the potential $V$. 

It is convenient to introduce the following rate functions:
\[
\widetilde K(t,x) := \exp\left(- \frac{9}{4}\left( \left( W(\rho_x) +\frac{\mu_0}{4\rho_x^2}\right)t \, \wedge \,  2\rho_x \sqrt{W(\rho_x) + \frac{\mu_0}{4\rho_x^2}}\right)\right) \quad \text{for} \quad \rho_x = |x| \vee 1,
\]
\smallskip
\[
\widetilde H(t,x) := \exp\left(- \frac{\sqrt{2}}{32 m }\left( \left(W(|x|) +\frac{\mu_0}{4|x|^2}\right)t \, \wedge \,  2|x| \sqrt{W(|x|)+\frac{\mu_0}{4|x|^2}}\right)\right) \quad \text{for} \quad |x| \geq 1,
\]
$\widetilde H(t,x) = 1$ for $|x| < 1$. Moreover, let
 \[
\gamma_1= \frac{\lambda_0}{2},  \qquad \qquad \gamma_2=d+W(1) +\frac{\mu_0}{4}.
\] 
Observe that $\widetilde K(t,x)$ and $\widetilde H(t,x)$ take exactly the same form 
\[
\exp\left(- c \left( \left(W(|x|) +\frac{\mu_0}{4|x|^2}\right)t \, \wedge \,  2|x| \sqrt{W(|x|)+\frac{\mu_0}{4|x|^2}}\right)\right) \quad \text{for} \quad |x| \geq 1, \ t>0,
\]
and they differ only by the value of the constant $c$ in the exponent. Moreover, we always have
\[
0<\exp\left(- \frac{9}{2}\sqrt{W(1)+\frac{\mu_0}{4}}\right) \leq \widetilde K(t,x) \leq \widetilde H(t,x) = 1, \qquad |x| < 1, \ t>0.
\]
We obtain from Corollary \ref{cor:dec} the following qualitatively sharp uniform two-sided estimates:
\begin{itemize}
\item[(1)] If \[\frac{2(\rho_x \vee \rho_y)}{\sqrt{W(\rho_x \vee \rho_y) +\frac{\mu_0}{4(\rho_x \vee \rho_y)^2}}} \leq t,\] then
\begin{align*}
c_2  e^{-\gamma_2 t}\, \widetilde K(t,x) \, \widetilde K(t,y) \, g_t(0,0) &\leq u_t(x,y)
	   \leq c_1\, e^{-\gamma_1 t}  \, \widetilde H(t,x) \, \widetilde H(t,y)\, g_{t}(0,0).
\end{align*}
\item[(2)] If \[\frac{2(\rho_x \vee \rho_y)}{\sqrt{W(\rho_x \vee \rho_y) +\frac{\mu_0}{4(\rho_x \vee \rho_y)^2}}} \geq t,\] then
\begin{align*}
c_2 \, \widetilde K(t,x) \, \widetilde K(t,y) \, g_t(x,y) \leq u_t(x,y)  
	     \leq c_1 \, \widetilde H(t,x) \, \widetilde H(t,y)\, g_{2t}(x,y).
\end{align*}
\end{itemize}
\end{example}

We now apply Example \ref{ex:conf_gen} to specific potentials. 

\begin{example} [\textbf{Polynomial and logarithmic potentials}] \label{ex:conf} Consider the following classes of potentials.

\begin{enumerate} 
\item[(1)] \emph{Polynomial potentials}:\ Let
\begin{align*} \label{eg:conf_ex_k}
V(x) = k |x|^{\alpha}, \qquad \alpha, k >0.
\end{align*}
Clearly, we have 
\[
V_*(x) = k (|x|/2)^{\alpha} \qquad \text{and} \qquad V^*(r) = k(2r)^{\alpha}.
\] 
Moreover, $\gamma_2=d+ k \cdot 2^{\alpha} +\mu_0/4$. Observe that \eqref{eq:doubling} is true for $m = 4^{\alpha}$, uniformly in $k>0$. Consequently, the estimates from Example \ref{ex:conf_gen} hold with 
\[
W(r)=k(2r)^{\alpha} \qquad \text{and} \qquad m = 4^{\alpha}.
\]
Moreover, the rate $9/4$ in the function $\widetilde K(t,x)$ (the lower bound) is uniform in $\alpha >0$ and $k>0$, and the rate 
$\sqrt{2}/(32m)$ in $\widetilde H(t,x)$ (the upper bound) is uniform in $k>0$ and $\alpha \in (0,\alpha_0]$, for every fixed $\alpha_0>0$ -- it can be chosen as $\sqrt{2}/(32 \cdot 4^{\alpha_0})$. 

Our result applies to both IUC ($\alpha >2$) and non-IUC ($\alpha \in (0,2]$) cases. In the non-IUC setting, the estimates are fully uniform in $k>0$ and $\alpha \in (0,2]$. 
 
 \smallskip

\item[(2)] \emph{Logarithmic potentials}:\ Let
\begin{align*} \label{eg:conf_ex_k}
V(x) = \log^{\alpha}(2+ k|x|), \qquad \alpha, k >0.
\end{align*}
One has 
\[
V_*(x) = \log^{\alpha}(2+ k(|x|/2)), \qquad  V^*(r) = \log^{\alpha}(2+ k(2r)),
\] 
and $\gamma_2=d+ \log^{\alpha}(2+2k) + \mu_0/4$. Moreover, it is direct to check that the condition \eqref{eq:doubling} holds with $m = 3^{\alpha}$, uniformly in $k>0$. Consequently, we obtain the estimates as in Example \ref{ex:conf_gen}, with 
\[
W(r)=\log^{\alpha}(2+ k(2r)) \qquad \text{and} \qquad m = 3^{\alpha},
\]
In particular, the rate $9/4$ in the function $\widetilde K(t,x)$ (the lower bound) is uniform in $\alpha >0$ and $k>0$, and the rate 
$\sqrt{2}/(32m)$ in $\widetilde H(t,x)$ (the upper bound) is uniform in $k>0$ and $\alpha \in (0,\alpha_0]$, for every fixed $\alpha_0>0$ -- we can just take $\sqrt{2}/(32 \cdot 3^{\alpha_0})$. 

Note that such potentials lead to non-IUC semigroups for every $\alpha>0$. Our estimates are fully uniform in $k>0$ and $\alpha \in (0,\alpha_0]$, for every fixed $\alpha_0>0$.  
\end{enumerate}

\end{example}

We remark that one of our primary motivations to perform the present project was to understand the large time properties of the Schr\"odinger semigroups with confining potentials which are not IUC. This is related to the recent progress in the field of nonlocal Schr\"odinger operators. First note that in order to describe the large time regularity of the semigroup, it is enough to study the \emph{asymptotic} version of IUC (aIUC in short) which is more general, see \cite{Kaleta-Lorinczi} for more details. For sharp necessary and sufficient conditions for (a)IUC in the nonlocal case we refer the reader to Kulczycki and Siudeja \cite{Kulczycki-Siudeja}, Kaleta and Kulczycki \cite{Kaleta-Kulczycki}, Kaleta and L\H orinczi \cite{Kaleta-Lorinczi}, and Chen and Wang \cite{Chen-Wang-1,Chen-Wang-2} (see also the related important paper by Kwaśnicki \cite{Kwasnicki} for stable semigroups on unbounded sets). 

In \cite{Kaleta-Schilling}, Kaleta and Schilling observed in the nonlocal setting that the Schr\"odinger semigroup with cofining potential which is not (a)IUC (no matter how slow is the growth of $V$ at infinity!) still manifests a weaker version of the regularity, which can be described as follows:\ there exists an increasing function $\rho$ (determined by the kinetic term and the potential) such that $\rho(t) \uparrow \infty$ as $t \uparrow \infty$ and a constant $c>0$ such that
\[
u_t(x,y) \leq c e^{-\lambda_0 t} \varphi_0(x) \varphi_0(y), \qquad |x| \wedge |y| \leq \rho(t), \  t \geq t_0 
\] 
($c$ is uniform in $t$ and $x,y$). This was called \emph{progressive} IUC (pIUC in short). 

Our estimates proven in this paper show that the same is true for classical Schr\"odinger semigroups, at least in a qualitatively sharp fashion. A similar qualitative property can also be derived from the estimates obtained by Chen and Wang in the recent interesting preprint \cite{Chen-Wang}, which we discuss in Remark \ref{rem:CW} below.

\subsection{Upper estimate for decaying potentials} \label{subsec:dec} Our result immediately gives the upper estimate for \emph{decaying} potentials, i.e.\ when $V(x) \to 0$ as $|x| \to \infty$. For clarity, we illustrate this with the potential
\begin{align} \label{eq:dec_ex}
V(x) = k (1 \vee |x|)^{-\alpha}, \qquad \alpha, k > 0
\end{align} 
(we remark that for the upper bound it is enough to assume that $V$ is bounded from below by the expression on the right hand side of \eqref{eq:dec_ex}). The estimates for potentials of this type were obtained in a more general settings of manifolds by Zhang \cite{Zhang-1}. We compare these results with our upper estimate. 

We have
\begin{equation*}
	V_*(x) = k \cdot \begin{cases}
		1 &  \text{for} \ |x| \leq \frac{2}{3} \\
		\big( \frac{3}{2} |x|\big)^{-\alpha}&\text{for} \ |x| \geq \frac{2}{3}.
	\end{cases}
\end{equation*}
The following estimate can be easily derived from Theorem \ref{th:main-1}. 

\begin{example} \label{ex:dec} Let $V$ be as in \eqref{eq:dec_ex}. For every $x, y \in \R^d$, $t>0$ have the following upper bound: 
\[
u_t(x,y) \leq c_1 \widetilde H(t,x) \widetilde H(t,y) g_{2t}(x,y),
\]
where $\widetilde H(t,x) = 1$ for $|x| < 1$ and
\[
\widetilde H(t,x) := \begin{cases}
 		\exp\left(- \frac{\sqrt{2}}{32} \cdot \left(\frac{2}{3}\right)^{\alpha} \left( \frac{k t}{|x|^{\alpha}} \wedge \,  2 \sqrt{k} |x|^{1-\alpha/2}\right)\right)	& \text{if} \ \alpha \in (0,2), \\
 		1	& \text{if} \ \alpha \geq 2, 
 	\end{cases}
		\qquad \text{for} \ |x| \geq 1.
\]
\end{example}
The rate in the exponent is an explicit constant which is uniform in the coupling parameter $k>0$ and $\alpha \in (0,\alpha_0]$, for every fixed $\alpha_0>0$.

As explained in Section \ref{sec:results}, our Theorem \ref{th:lower_main} is not sharp enough to give a similar lower bound for decaying potentials. This is a much more difficult problem that requires much subtle argument. On the other hand, this example shows that at least  for the potentials as in \eqref{eq:dec_ex} with $\alpha \in (0,2)$, the function $\widetilde H(t,x)$ resulting from Theorem \ref{th:main-1} is sharp in one of the regimes. For small $t$'s the kernel $u_t(x,y)$ is just comparable to $g_t(x,y)$, so we only look at large times. Indeed, if $t \leq (2/\sqrt{k}) |x|^{1+\alpha/2}$, then $\widetilde H(t,x) = \exp\big(- c k t/|x|^{\alpha}\big)$,
which is qualitatively the same as the function $w_2(x,t)$ in the lower estimate of \cite[Theorem 1.2]{Zhang-1}; this means that we improve the power in the exponent of the function $w_1(x,t)$ in \cite[Theorem 1.1]{Zhang-1} for this time-space region (see the comments in Remark 1.2 of the quoted paper). In fact, the exponent in our function $\widetilde H(t,x)$ is qualitatively better in the larger time-space region which is roughly described by $\sqrt{t} \leq |x|^{1+\alpha/2}$. We also remark that due to Theorem \ref{th:upper_final} the Gaussian term in the estimate above can be made  nearly optimal  at the cost of the rate in $\widetilde H$, and we have an uniform control with respect to parameter $k>0$ and $\alpha \in (0,\alpha_0]$. On the other hand, if $\sqrt{t} \geq |x|^{1+\alpha/2}$, then the estimate in \cite[Theorem 1.1]{Zhang-1} is sharper than ours. For $\alpha>2$ the estimate above is trivial, but is also sharp. 

\subsection{Upper estimate for potentials bounded away from zero.} \label{subsec:bdd_away} We can also give a non-trivial upper estimate for potentials that are \emph{bounded away from zero outside a bounded set}, i.e.\ the functions $V$ for which
\begin{align} \label{eq:bdd_away_ex}
\text{there exist} \  \kappa >0 \ \text{and} \ r_0 \geq 0 \ \text{such that} \  V(x) \geq \kappa, \ |x| \geq r_0.
\end{align} 
Theorem \ref{th:main-1} immediately gives the following estimate. 

\begin{example} \label{ex:bdd} Let $V$ be as in \eqref{eq:bdd_away_ex}. For all $x, y \in \R^d$, $t>0$ we have the following upper bound: 
\[
u_t(x,y) \leq c_1 \widetilde H(t,x) \widetilde H(t,y) g_{2t}(x,y),
\]
where $\widetilde H(t,x) = 1$ for $|x| < 2r_0$ and
\[
\widetilde H(t,x) = \exp\left(- (\sqrt{2}/32) \left( \kappa t \wedge \,  2\sqrt{\kappa} |x|\right)\right),
		\qquad \text{for} \ |x| \geq 2r_0.
\]
Moreover, if $r_0=0$ or we just know that $\lambda_0>0$, then $g_{2t}(x,y)$ can be replaced with $e^{-(\lambda_0/2) t} g_{t}(0,0)$.
\end{example}
\subsection{Upper estimate for more general, non-radial potentials.} \label{subsec:general}

Observe that by using Theorem \ref{th:main-1} we can also get the upper estimate of the same type for highly non-radial potentials, including confining, decaying, bounded away from zero ones, and mixtures. This follows from the fact that $H(t,x)$ depends only on the values of the potential $V$ in the ball $\overline{B}_{|x|/2}(x)$. It seems to be a novelty even if $d=1$. Let $\alpha_1, \alpha_2, c>0$. For a quick overview, we just list some examples of potentials on $\R$: 
\begin{enumerate}
\item[(1)] (nonsymmetric confining potential):
\begin{equation*}
	V(x) = \begin{cases} 
	 x^{\alpha_1}, & \ \ x \geq 0, \\
	 (-x)^{\alpha_2}, & \ \ x \leq 0,
		\end{cases}
		\qquad \text{with} \qquad \alpha_1 \neq \alpha_2.
\end{equation*}

\item[(2)] (nonsymmetric decaying potential):
\begin{equation*}
	V(x) = \begin{cases} 
	 \ (1 \vee x)^{-\alpha_1}, & \ \ x \geq 0, \\
	 (1 \vee -x)^{-\alpha_2}, & \ \ x \leq 0,
		\end{cases} 
		\qquad \text{with} \qquad\alpha_1 \neq \alpha_2.
\end{equation*}

\item[(3)] (mixture of confining and decaying potentials):
\begin{equation*}
	V(x) = \begin{cases} 
	  (1 \vee x)^{-\alpha_1}, & \ \ x \geq 0, \\
	 (-x)^{\alpha_2}, & \ \ x \leq 0,
		\end{cases}
		\qquad \text{or} \qquad 
			V(x) = \begin{cases} 
	 x^{\alpha_1}, & \ \ x \geq 0, \\
	 (1 \vee -x)^{-\alpha_2}, & \ \ x < 0.
		\end{cases}
\end{equation*}

\item[(4)] (mixture of confining/decaying and constant potentials):
\begin{equation*}
	V(x) = \begin{cases} 
	 c, & \ \ x \geq 0, \\
	 (-x)^{\alpha_1}, & \ \ x <  0,
		\end{cases}
		\qquad \text{or} \qquad 
			V(x) = \begin{cases} 
	 	(1 \vee x)^{-\alpha_1},& \ \ x > 0, \\
     c & \ \ x  \leq  0,
		\end{cases}
\end{equation*}
or the other rearrangements.
\end{enumerate}

For all of the examples of this type we obtain the upper bound similar to those in Examples \eqref{ex:conf}, \eqref{ex:dec} and \eqref{ex:bdd} above, but with $\widetilde H$ which takes a different form for $x>0$ and $x<0$.

\begin{remark} \label{rem:CW} 
While finalizing the present manuscript, J. Wang informed us 
about his recent preprint \cite{Chen-Wang} with X. Chen, where the authors provide 
the two-sided qualitatively sharp
estimates for the heat kernels of Schrödinger operators with 
nonnegative, locally bounded,
confining potentials V, which are comparable to radial and monotone profiles $g$. This 
result is related to our Examples \ref{ex:conf} and \ref{ex:conf_gen}
which apply to the same class of potentials.

We want to point out that our work was performed simultaneously with, 
and independently of, \cite{Chen-Wang}. The estimates in \cite{Chen-Wang} have a completely 
different structure from ours, which seems to be related to the fact 
that the argument in \cite{Chen-Wang} requires the assumption that $s \mapsto (1+s)/\sqrt{g(s)}$ 
is an almost monotone function. Note that our Theorem \ref{th:main-1}
covers a larger class of potentials, as it applies to general
nonnegative locally bounded potentials.
The arguments in \cite{Chen-Wang} and in our paper are based on completely different 
ideas. The key step in our proof of the upper estimate is based on the 
fact that the Laplace transform
of the exit time from a ball with radius proportional to the norm of 
$x$ and evaluated at $\lambda = V(x)$ takes the shape of the ground state $\varphi_0(x)$ of 
the Schr\"odinger operator $H$. Here we
apply the classical result of Wendel \cite{Wendel}. In the proof of the lower 
estimate, we use directly
the estimate (with optimal Gaussian term) for the semigroup of a 
Brownian motion in a ball, which was recently discovered by Małecki and 
Serafin \cite{malecki-serafin}, and combine this bound with the direct
estimates in Lemma \ref{lem:aux-2}. This is described in more detail at the end of Section \ref{sec:results}.
Our proof does not use any information on the joint distribution of the 
exit position and the exit time of
Brownian motion from a ball, which is the main tool in \cite{Chen-Wang}. Our approach 
leads us to
qualitatively sharp estimates with explicit numerical rates, uniform in 
$V$, with clear dependence
on the dimension $d$, and optimal or  nearly optimal  Gaussian terms, which 
are obtained by
rather short and direct proofs. The constants in the estimates of \cite{Chen-Wang} 
are not explicit; they seem
to depend of $V$ and $d$ in an implicit fashion. 
\end{remark}

\subsection*{Data availability} Data sharing not applicable to this article as no datasets were generated or analysed during the current study.

\subsection*{Conflict of interests statement} The authors have no competing interests to declare that are relevant to the content of this article.

\bibliographystyle{plain}

\begin{thebibliography}{10}

\bibitem{Banuelos}
R.~Bañuelos.
\newblock Intrinsic ultracontractivity and eigenfunction estimates for
  {S}chrödinger operators.
\newblock {\em Journal of Functional Analysis}, 100(1):181--206, 1991.

\bibitem{Bogdan-Dziubanski-Szczypkowski}
K.~Bogdan, J.~Dziuba\'{n}ski, and K.~Szczypkowski.
\newblock Sharp {G}aussian estimates for heat kernels of {S}chr\"{o}dinger
  operators.
\newblock {\em Integral Equations Operator Theory}, 91(1):Paper No. 3, 20,
  2019.

\bibitem{Carmona}
R.~Carmona.
\newblock Pointwise bounds for {S}chr\"{o}dinger eigenstates.
\newblock {\em Comm. Math. Phys.}, 62(2):97--106, 1978.

\bibitem{Chen-Wang}
X.~Chen and J.~Wang.
\newblock Two-sided heat kernel estimates for {S}chrödinger operators with
  unbounded potentials, arXiv:2301.06744v1, 17.01.2023.

\bibitem{Chen-Wang-1}
X.~Chen and J.~Wang.
\newblock Intrinsic contractivity properties of {F}eynman-{K}ac semigroups for
  symmetric jump processes with infinite range jumps.
\newblock {\em Front. Math. China}, 10(4):753--776, 2015.

\bibitem{Chen-Wang-2}
X.~Chen and J.~Wang.
\newblock Intrinsic ultracontractivity of {F}eynman-{K}ac semigroups for
  symmetric jump processes.
\newblock {\em J. Funct. Anal.}, 270(11):4152--4195, 2016.

\bibitem{Chung-Zhao}
K.~L.~Chung and Z.~X.~Zhao.
\newblock {\em From {B}rownian motion to {S}chr\"{o}dinger's equation}, volume
  312 of {\em Grundlehren der mathematischen Wissenschaften [Fundamental
  Principles of Mathematical Sciences]}.
\newblock Springer-Verlag, Berlin, 1995.

\bibitem{Davies-book}
E.~B. Davies.
\newblock {\em Heat kernels and spectral theory}, volume~92 of {\em Cambridge
  Tracts in Mathematics}.
\newblock Cambridge University Press, Cambridge, 1990.

\bibitem{Davies-Simon}
E.~B. Davies and B.~Simon.
\newblock Ultracontractivity and the heat kernel for {S}chrödinger operators
  and {D}irichlet laplacians.
\newblock {\em Journal of Functional Analysis}, 59(2):335--395, 1984.

\bibitem{Demuth-Casteren}
M.~Demuth and J.~A. Van~Casteren.
\newblock {\em Stochastic spectral theory for selfadjoint {F}eller operators: A
  functional integration approach}.
\newblock Springer Science \& Business Media, 2000.

\bibitem{NIST:DLMF}
{\it NIST Digital Library of Mathematical Functions}.
\newblock http://dlmf.nist.gov/, Release 1.1.2 of 2021-06-15.
\newblock F.~W.~J. Olver, A.~B. {Olde Daalhuis}, D.~W. Lozier, B.~I. Schneider,
  R.~F. Boisvert, C.~W. Clark, B.~R. Miller, B.~V. Saunders, H.~S. Cohl, and
  M.~A. McClain, eds.

\bibitem{Jakubowski-Szczypkowski}
T.~Jakubowski and K.~Szczypkowski.
\newblock Sharp and plain estimates for {S}chr\"{o}dinger perturbation of
  {G}aussian kernel, 2020.

\bibitem{Kaleta-Kulczycki}
K.~Kaleta and T.~Kulczycki.
\newblock Intrinsic ultracontractivity for {S}chr\"{o}dinger operators based on
  fractional {L}aplacians.
\newblock {\em Potential Anal.}, 33(4):313--339, 2010.

\bibitem{Kaleta-Lorinczi}
K.~Kaleta and J.~L\H{o}rinczi.
\newblock Pointwise eigenfunction estimates and intrinsic
  ultracontractivity-type properties of {F}eynman-{K}ac semigroups for a class
  of {L}\'{e}vy processes.
\newblock {\em Ann. Probab.}, 43(3):1350--1398, 2015.

\bibitem{Kaleta-Schilling}
K.~Kaleta and R.~L.~Schilling.
\newblock Progressive intrinsic ultracontractivity and heat kernel estimates
  for non-local {S}chr\"{o}dinger operators.
\newblock {\em J. Funct. Anal.}, 279(6):108606, 69, 2020.

\bibitem{Kulczycki-Siudeja}
T.~Kulczycki and B.~Siudeja.
\newblock Intrinsic ultracontractivity of the {F}eynman-{K}ac semigroup for
  relativistic stable processes.
\newblock {\em Trans. Amer. Math. Soc.}, 358(11):5025--5057, 2006.

\bibitem{Kwasnicki}
M.~Kwa\'{s}nicki.
\newblock Intrinsic ultracontractivity for stable semigroups on unbounded open
  sets.
\newblock {\em Potential Anal.}, 31(1):57--77, 2009.

\bibitem{malecki-serafin}
J.~Małecki and G.~Serafin.
\newblock {D}irichlet heat kernel for the {L}aplacian in a ball.
\newblock {\em Potential Anal.}, 52(4):545--563, 2020.

\bibitem{Metafune-Pallara-Rhandi}
G.~Metafune, D.~Pallara, and A.~Rhandi.
\newblock Kernel estimates for {S}chr\"{o}dinger operators.
\newblock {\em J. Evol. Equ.}, 6(3):433--457, 2006.

\bibitem{Metafune-Spina}
G.~Metafune and C.~Spina.
\newblock Kernel estimates for a class of {S}chr\"{o}dinger semigroups.
\newblock {\em J. Evol. Equ.}, 7(4):719--742, 2007.

\bibitem{Ouhabaz-Rhandi}
El~M. Ouhabaz and A.~Rhandi.
\newblock Kernel and eigenfunction estimates for some second order elliptic
  operators.
\newblock {\em J. Math. Anal. Appl.}, 387(2):799--806, 2012.

\bibitem{Reed-Simon}
M.~Reed and B.~Simon.
\newblock {\em Methods of modern mathematical physics. {IV}. {A}nalysis of
  operators}.
\newblock Academic Press [Harcourt Brace Jovanovich, Publishers], New
  York-London, 1978.

\bibitem{SP12}
R.~L.~Schilling and L.~Partzsch.
\newblock {\em Brownian Motion}.
\newblock De Gruyter, Berlin, Boston, 2012.

\bibitem{Serafin}
G.~Serafin.
\newblock Laplace {D}irichlet heat kernels in convex domains, 2021.

\bibitem{Sikora}
A.~Sikora.
\newblock On-diagonal estimates on {S}chr\"{o}dinger semigroup kernels and
  reduced heat kernels.
\newblock {\em Comm. Math. Phys.}, 188(1):233--249, 1997.

\bibitem{Simon}
B.~Simon.
\newblock {Schrödinger semigroups}.
\newblock {\em Bulletin (New Series) of the American Mathematical Society},
  7(3):447 -- 526, 1982.

\bibitem{Simon-1}
B.~Simon.
\newblock Pointwise bounds on eigenfunctions and wave packets in {$N$}-body
  quantum systems. {III}.
\newblock {\em Trans. Amer. Math. Soc.}, 208:317--329, 1975.

\bibitem{Simon-2}
B.~Simon.
\newblock {\em Functional integration and quantum physics}, volume~86 of {\em
  Pure and Applied Mathematics}.
\newblock Academic Press, Inc. [Harcourt Brace Jovanovich, Publishers], New
  York-London, 1979.

\bibitem{Spina}
C.~Spina.
\newblock Kernel estimates for a class of {K}olmogorov semigroups.
\newblock {\em Arch. Math. (Basel)}, 91(3):265--279, 2008.

\bibitem{Wendel}
J.~G.~Wendel.
\newblock {Hitting Spheres with Brownian Motion}.
\newblock {\em The Annals of Probability}, 8(1):164 -- 169, 1980.

\bibitem{Zhang-1}
Qi~S.~Zhang.
\newblock Large time behavior of {S}chr\"{o}dinger heat kernels and
  applications.
\newblock {\em Comm. Math. Phys.}, 210(2):371--398, 2000.

\bibitem{Zhang}
Qi~S.~Zhang.
\newblock The boundary behavior of heat kernels of {D}irichlet laplacians.
\newblock {\em Journal of Differential Equations}, 182(2):416--430, 2002.

\end{thebibliography}

\end{document}